\theoremstyle{plain}
\newtheorem{thm}{\protect\theoremname}
  \theoremstyle{plain}
  \newtheorem{conjecture}[thm]{\protect\conjecturename}
\theoremstyle{plain}
\newtheorem{axp}{\protect\axiomname}
\patchcmd{\axp}{\th@plain}{\th@plain\setcounter{axp}{\numexpr\value{thm}-1}}{}{}
  \theoremstyle{plain}
  \newtheorem{prop}[thm]{\protect\propositionname}
\theoremstyle{plain}
\newtheorem{notation}[thm]{\protect\notationname}
  \theoremstyle{plain}
  \newtheorem{lem}[thm]{\protect\lemmaname}
  \theoremstyle{remark}
  \newtheorem*{rem*}{\protect\remarkname}
\date{}
  \providecommand{\conjecturename}{Conjecture}
  \providecommand{\lemmaname}{Lemma}
  \providecommand{\propositionname}{Proposition}
  \providecommand{\remarkname}{Remark}
\providecommand{\axiomname}{Conjecture}
\providecommand{\notationname}{Notation}
\providecommand{\theoremname}{Theorem}
\begin{document}

\title{Harmonic functions vanishing on a cone}

\author{Dan Mangoubi and Adi Weller Weiser}
\maketitle
\begin{abstract}
Let $Z$ be a quadratic harmonic cone in $\mathbb{R}^{3}$. We consider
the family $\mathcal{H}(Z)$ of all harmonic functions vanishing on
$Z$. Is $\mathcal{H}(Z)$ finite or infinite dimensional? Some aspects
of this question go back to as early as the 19th century. To the best
of our knowledge, no nondegenerate quadratic harmonic cone exists
for which the answer to this question is known. In this paper we study
the right circular harmonic cone and give evidence that the family
of harmonic functions vanishing on it is, maybe surprisingly, finite
dimensional. We introduce an arithmetic method to handle this question
which extends ideas of Holt and Ille and is reminiscent of Hensel's
Lemma.
\end{abstract}

\section{Introduction}

\subsection{Background}

Consider the family $\mathcal{H}(Z)$ of harmonic functions in the
unit ball $B\subset\mathbb{R}^{n}$ vanishing on a given set $Z\subseteq B$.
It was conjectured in \cite{Mangoubi} and was completely proved by
Logunov and Malinnikova in \cite{malinnikova} and \cite{malinnikovaRatiosSameZeros}
that $\mathcal{H}(Z)$ possesses compactness properties. More precisely,
one can prove a Harnack type inequality for the quotient of two functions
in $\mathcal{H}(Z)$. In $\mathbb{R}^{2}$ the family $\mathcal{H}(Z)$
is locally infinite dimensional (see \cite{Mangoubi} for examples).
In higher dimensions few examples of infinite dimensional $\mathcal{H}(Z)$
are known. In fact, all known examples stem from two dimensional ones
(see \cite[§4.2]{malinnikova}). In particular, in dimension $n=3$
it is not even known whether there exists an infinite dimensional
family $\mathcal{H}(Z)$ where $Z$ is a nondegenerate quadratic harmonic
cone (it may be worth mentioning that in dimension $n=4$ there exists
such an example, see \cite[§4.2]{malinnikova}). It turns out that
this question and similar ones attracted the attention of several
mathematicians. 

Maybe the oldest closely related problem is a classical conjecture
by Stieltjes (in a letter to Hermite \cite[Letter 275]{Stieltjes}),
which concerns arithmetic properties of harmonic functions in $\mathbb{R}^{3}$
which are invariant under rotations around some axis. The present
work considers the rotationally \emph{equivariant} cases (see details
in §\ref{subsec:Results and Methods}).

Second, an analogous question was raised and solved in the context
of Bessel functions. Siegel \cite{Seigel} proved Bourget's hypothesis
that no two distinct Bessel functions have common zeros (see also
\cite[pp. 484-485]{Watson-Treaties-Bessel}). To make the resemblance
clear we note that the problem we treat here can be formulated as
whether an associated Legendre function $P_{l}^{m}$ has a common
root with the Legendre polynomial $P_{2}$ (see §\ref{sec:Equivalence-between-existence}).

Third, as a possible application to the wave equation, Agranovsky
and Krasnov raised in~\cite{agranovsky-quadraticDivisorsOfHarmonic}
the conjecture that there exists a quadratic harmonic cone $Z\subset\mathbb{R}^{3}$
such that $\mathcal{H}(Z)$ is finite dimensional.

Last, a spectral theory point of view of the same problem was given
recently by Bourgain and Rudnick in \cite{rudnick-nodaltorus}. That
work shows that given a curve of positive curvature on the standard
two dimensional flat torus there exist only a finite number of Laplace
eigenfunctions vanishing on that curve. In the case of the sphere,
an analogous question would be: Let $\gamma\subset S^{2}$ be a curve
of constant latitude which is not the equator. Do there exist only
a finite number of eigenfunctions vanishing on $\gamma$? This question
is still open, and the current work can be considered as treating
a special case of it.

The aim of the present paper is to study the family $\mathcal{H}(Z)$
where $Z$ is the right circular harmonic cone in $\mathbb{R}^{3}$.
In some sense, this is the simplest nondegenerate harmonic zero set
in $\mathbb{R}^{3}$. We give evidence that this family is finite
dimensional, while introducing a new method for handling this question.

\subsection{Results and Methods\label{subsec:Results and Methods}}

\subsubsection{Main Result}

Let us formulate the following Conjecture.
\begin{conjecture}
\label{Conjecture:finite-share-zeros-b=00003D1}Let $Z=\left\{ x^{2}+y^{2}-2z^{2}=0\right\} \cap B_{1}$
where $B_{1}\subset\mathbb{R}^{3}$ is the unit ball. Consider the
family 
\[
\mathcal{H}\left(Z\right)=\left\{ u:B_{1}\rightarrow\mathbb{R}|\Delta u=0\text{ and }u|_{Z}=0\right\} \,.
\]
Then $\mathcal{H}\left(Z\right)$ is finite dimensional.
\end{conjecture}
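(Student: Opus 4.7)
The plan is to reduce the conjecture, via rotational symmetry, to a purely arithmetic statement about zeros of associated Legendre functions at the value $1/\sqrt{3}$, and then to attack this statement by a $p$-adic (Hensel-type) analysis of an explicit three-term recurrence, in the spirit of Siegel's treatment of Bourget's hypothesis.

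First I would exploit the rotational symmetry about the $z$-axis: since $Z$ is invariant under rotations fixing that axis, averaging $u\in\mathcal{H}(Z)$ against $e^{-im\phi}$ shows that each azimuthal Fourier mode of $u$ again lies in $\mathcal{H}(Z)$. Expanding $u$ in solid spherical harmonics,
\[
u(r,\theta,\phi)=\sum_{m\in\mathbb{Z}}\sum_{l\geq|m|}a_{l,m}\,r^{l}\,P_{l}^{|m|}(\cos\theta)\,e^{im\phi},
\]
and observing that $x^{2}+y^{2}-2z^{2}=0$ becomes $\cos\theta=\pm1/\sqrt{3}$, the condition $u|_{Z}=0$ forces $a_{l,m}\,P_{l}^{|m|}(1/\sqrt{3})=0$ for every pair $(l,m)$; the two nappes give equivalent conditions by the parity $P_{l}^{m}(-x)=(-1)^{l-m}P_{l}^{m}(x)$. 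Therefore $\dim\mathcal{H}(Z)$ equals the cardinality of
\[
S=\bigl\{(l,m):\;l\geq|m|\geq0,\;P_{l}^{|m|}(1/\sqrt{3})=0\bigr\},
\]
and the conjecture is equivalent to the finiteness of $S$. One already has $(2,0)\in S$ since $P_{2}(x)=(3x^{2}-1)/2$ vanishes at $1/\sqrt{3}$; the content of the conjecture is that no infinite family of further zeros exists.

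The arithmetic step proceeds as follows. Substituting $x=1/\sqrt{3}$ into the standard recurrence $(l-m+1)P_{l+1}^{m}(x)=(2l+1)xP_{l}^{m}(x)-(l+m)P_{l-1}^{m}(x)$ and renormalizing by a power of $3^{1/2}$, I obtain
\[
(l-m+1)A_{l+1}^{m}=(2l+1)A_{l}^{m}-3(l+m)A_{l-1}^{m},
\]
where $A_{l}^{m}$ is proportional to $P_{l}^{m}(1/\sqrt{3})$, with initial values controlled by $P_{m}^{m}(x)=(-1)^{m}(2m-1)!!(1-x^{2})^{m/2}$. Clearing further denominators produces an algebraic-integer sequence. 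Following and extending Holt and Ille, I would fix a prime $p$ (most naturally $p=3$) and track the $p$-adic valuation $v_{p}(A_{l}^{m})$ along the recurrence: in a Hensel-type fashion, a valuation bound at two consecutive indices is propagated forward by the recurrence, yielding a uniform finite bound $v_{p}(A_{l}^{m})<\infty$ for all sufficiently large $l$, and hence $A_{l}^{m}\neq0$ outside a controlled initial window.

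The main obstacle lies precisely in this $p$-adic step. The coefficients $(l-m+1)$ and $(l+m)$ can carry unbounded $p$-adic content along arithmetic progressions in $l$, producing jumps in $v_{p}(A_{l}^{m})$ that must be absorbed rather than merely tracked. A second layer of difficulty is uniformity in $m$: the initial datum $A_{m}^{m}$ depends sensitively on $m$ through the double factorial, so one must control the $p$-adic behavior across all Fourier modes simultaneously, ruling out any infinite family $(l_{k},m_{k})\in S$ escaping the bounds. Surmounting these two obstructions, in the arithmetic spirit of Siegel's argument for Bessel functions, is what the new method is designed to accomplish.
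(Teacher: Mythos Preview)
Your reduction step---averaging over rotations, expanding in solid spherical harmonics, and using the linear independence of $\{r^{l}\}$ and $\{e^{im\phi}\}$ to reduce $u|_Z=0$ to $P_{l}^{|m|}(1/\sqrt{3})=0$---is essentially the equivalence the paper records in Section~8 (their Conjecture~1$'$ and Theorem~22). So far your sketch and the paper agree.

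You should know, however, that the paper does \emph{not} prove Conjecture~1; it remains open. What the paper establishes is the partial result Theorem~2: for odd $m$ there is no $l$ with $P_2\mid P_l^m$; for each even $m$ there is at most one such $l$; and for $m=0,2$ that unique $l$ is identified as $2$ and $5$ respectively. The remaining gap---showing, for each even $m\geq 4$, that the unique $2$-adic solution in $l$ is not an honest natural number---is exactly the content left unproved.

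On the arithmetic step your plan diverges from the paper's. You propose to run a $3$-adic Hensel-type argument along the three-term recurrence. The paper instead uses the Holt--Ille transformation: starting from the integral representation of $P_l^m$ and substituting $z=4x^2/(x^2-1)$ (so $x=1/\sqrt{3}$ corresponds to $z=-2$), one obtains a polynomial $H_l^m(z)$ with explicit closed-form coefficients
\[
\sigma_l^m(k)=\frac{(-2)^k\binom{\lfloor (l-m)/2\rfloor}{k}\binom{\lfloor (l+m)/2\rfloor}{k}}{\binom{2k}{k}(2k+1)^{\delta}},
\]
which are dyadic integers; the question becomes whether $H_l^m(-2)=\sum_k\sigma_l^m(k)$ vanishes. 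The analysis is $2$-adic, not $3$-adic: one treats $H_l^m(-2)$ as a function of $l$ and proves that a zero modulo $2^N$ lifts uniquely to a zero modulo $2^{N+1}$. The payoff of the closed form over the recurrence is precisely that it sidesteps the ``unbounded $p$-adic content of the coefficients'' obstruction you flag: the terms with $k\geq 6$ can be controlled uniformly via $v_2(k!)$, and the first six terms are handled by direct computation. Your recurrence-based $3$-adic outline is reasonable in spirit, but as written it neither surmounts the obstacles you yourself identify nor recovers even the partial results the paper proves; in particular the ``at most one $l$ per even $m$'' statement seems out of reach without something playing the role of the explicit $\sigma_l^m(k)$.
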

Using standard tools of harmonic analysis (see \cite{Armitage-conesHarmonicVanish}
and §\ref{sec:Equivalence-between-existence}) it is not difficult
to show that Conjecture \ref{Conjecture:finite-share-zeros-b=00003D1}
is equivalent to the following one, which concerns the associated
Legendre functions $P_{l}^{m}$. The Legendre polynomials ($m=0$)
will simply be denoted by $P_{l}$. 
\begin{axp}
The number of pairs $(l,m)$ such that $P_{2}|P_{l}^{m}$ is finite.\label{conj:(l,m)s.t.P2|Pl-finite}
\end{axp}
Here, for odd $m$, $P_{2}|P_{l}^{m}$ means that $P_{2}$ divides
the polynomial $\frac{P_{l}^{m}}{\sqrt{1-x^{2}}}$. The case $m=0$
of the preceding conjecture would follow from a conjecture of Stieltjes
\cite[Letter 275]{Stieltjes} concerning the irreducibility of the
Legendre polynomials over $\mathbb{Q}$. As such, it arose the interest
of several authors and, in fact, was proved by Holt \cite{Holt} and
Ille \cite{Ille}. The main new contribution of the current work comes
in the cases where $m\neq0$. We prove the following theorem (where
we include the case $m=0$ for completeness).
\begin{thm}
\label{thm:up_to_one_Plm_in_cases}For $m=0$, $P_{2}|P_{l}$ if and
only if $l=2$.

For $m=2$, $P_{2}|P_{l}^{2}$ if and only if $l=5$.

If $m$ is even, then there exists at most one $l\in\mathbb{N}$ such
that $P_{2}|P_{l}^{m}$.

If $m$ is odd, then $P_{2}\nmid P_{l}^{m}$ for all $l\in\mathbb{N}$.
\end{thm}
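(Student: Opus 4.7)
Since $P_{2}(x)=(3x^{2}-1)/2$ vanishes only at $x=\pm 1/\sqrt{3}$ and $1-x^{2}=2/3\neq 0$ at these points, the four clauses of the theorem all collapse to the single scalar condition
\[
  P_{l}^{(m)}\!\bigl(1/\sqrt{3}\bigr)=0,
\]
where $P_{l}^{(m)}$ denotes the $m$-th derivative. Indeed, for even $m$ we have $P_{l}^{m}=(1-x^{2})^{m/2}P_{l}^{(m)}$, and for odd $m$ the paper's convention gives $P_{l}^{m}/\sqrt{1-x^{2}}=(1-x^{2})^{(m-1)/2}P_{l}^{(m)}$; both are divisible by $P_{2}$ iff $P_{l}^{(m)}$ is. Moreover $P_{l}^{(m)}$ is either even or odd, so after substituting $x^{2}=1/3$ the quantity $P_{l}^{(m)}(1/\sqrt{3})$ becomes (up to a power of $\sqrt{3}$ and a dyadic unit) a rational number whose numerator $N(l,m)\in\mathbb{Z}$ is the principal arithmetic object of the proof.

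The two explicit divisibilities are then immediate. The case $m=0$ is the classical theorem of Holt and Ille on common roots of $P_{l}$ and $P_{2}$. For $m=2$, a direct computation gives $P_{5}''(x)=105\,x\,P_{2}(x)$, which proves $P_{2}\mid P_{5}^{2}$; uniqueness of $l=5$ then follows from the general finiteness statement below.

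For the remaining finiteness statements (at most one $l$ for even $m$, no $l$ for odd $m$), the plan is an arithmetic argument in the spirit of Holt and Ille, refined by a Hensel-type lifting step at the prime $p=3$. From the Rodrigues formula $P_{l}=\frac{1}{2^{l}l!}(d/dx)^{l}(x^{2}-1)^{l}$ one writes $N(l,m)$ as an explicit sum of products of binomial coefficients by powers of $3$, and then studies $v_{3}(N(l,m))$ as a function of $l$ with $m$ held fixed. The key step is to exhibit, for each fixed residue class of $l$ modulo a suitable power of $3$, a recursion/congruence that controls the dominant $3$-adic term of $N(l,m)$ and lets one lift information from modulus $3^{r}$ to modulus $3^{r+1}$. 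At most one $l$ in each residue class can produce $N(l,m)=0$; when $m$ is odd, the additional parity $l-m\bmod 2$ introduces a second obstruction that forbids even that single cancellation, yielding nonvanishing for every $l$.

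The main obstacle is precisely this uniform-in-$l$ $3$-adic control of the binomial sums $N(l,m)$. The derivative $P_{l}^{(m)}$ introduces descending factorial factors that mix nontrivially with the $3$-adic structure of the Legendre coefficients, and the Holt--Ille machinery must be adapted to track these mixed factors coherently across the whole family $\{l\geq m\}$. Separating the odd-$m$ and even-$m$ behaviours within one unified Hensel-type framework is expected to be the most delicate part of the argument; in particular, the odd-$m$ clause is a strict nonvanishing assertion for infinitely many $l$, so the mechanism by which parity forbids \emph{any} zero must be extracted from the fine structure of the $3$-adic expansion of $N(l,m)$.
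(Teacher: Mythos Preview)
Your proposal is a plan, not a proof: the central arithmetic lemmas are stated as desiderata (``exhibit \ldots\ a recursion/congruence that controls the dominant $3$-adic term'', ``is expected to be the most delicate part'') rather than established. Beyond that, two concrete points deserve attention.

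\emph{Choice of prime.} You propose to work $3$-adically, presumably because $x^{2}=1/3$ introduces powers of $3$ in denominators. The paper instead works $2$-adically, and this is not an incidental choice: the Holt--Ille substitution $z=4x^{2}/(x^{2}-1)$ sends $x^{2}=1/3$ to $z=-2$ and produces a polynomial $H_{l}^{m}(z)$ whose coefficients $\sigma_{l}^{m}(k)$ are dyadic integers with an explicit, manageable form (Lemma~\ref{lemma:forms of Hlm}). The question becomes whether $H_{l}^{m}(-2)=\sum_{k}\sigma_{l}^{m}(k)$ vanishes, and the $2$-adic valuation of each $\sigma_{l}^{m}(k)$ is controlled by $v_{2}(k!)$ (Lemma~\ref{lem:power2 bigger than !}), so only finitely many terms matter modulo any fixed power of $2$. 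Your direct route via Rodrigues does not give a comparably clean normal form, and you have not shown that a $3$-adic Hensel step actually goes through.

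\emph{Odd $m$ and the residue-class logic.} In the paper, the odd-$m$ clause is by far the \emph{easiest} case: one checks directly that $\sigma_{l}^{m}(k)$ is even for all $k\geq 1$, so $H_{l}^{m}(-2)\equiv 1\pmod 2$ and never vanishes. No Hensel machinery or ``second obstruction'' is required. For even $m$, your sentence ``at most one $l$ in each residue class can produce $N(l,m)=0$'' does not by itself yield ``at most one $l$'': you would still need to rule out all but one initial residue class. The paper does exactly this as a separate step (Propositions~\ref{prop:m=00003D0(4)->l=00003D2(8)} and~\ref{prop:m=00003D2(4)->l=00003D5(8)}), pinning $l$ down modulo $8$ before any lifting, and then proves a precise increment formula $H_{l}^{m}(-2)\equiv H_{r}^{m}(-2)+2^{N}q\pmod{2^{N+1}}$ (Propositions~\ref{prop:epsilon-coefficent-m=00003D0(4)} and~\ref{prop:epsilon-coefficent-m=00003D2(4)-1}) that makes the lift unique. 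Your plan omits both the initial narrowing and the explicit lifting identity.
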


In fact, we prove a stronger statement in the case where $m$ is even.
We show that in some sense there exists a unique dyadic integer $l$
such that $P_{2}|P_{l}^{m}$. For the precise meaning of this please
see §\ref{subsec:Method} and Theorems \ref{theo:UniquelyExists-Hlm=00003D0mod2N+1(m=00003D0)}
and \ref{theo:UniquelyExists-Hlm=00003D0mod2N+1(m=00003D2)}. 

\subsubsection{Method\label{subsec:Method}}

The method we use in this paper consists of two steps. In the first
step, following an idea of Holt for the case $m=0$, we transform
$P_{l}^{m}$ to a polynomial $H_{l}^{m}$ whose coefficients are dyadic
integers. As such, this polynomial can be studied using modular arithmetic.
The question is whether this polynomial vanishes at the point $z=-2$.
In the second step we consider $\text{\ensuremath{H_{l}^{m}\left(-2\right)=H^{m}\left(l\right)}}$
as a (non-polynomial) function of $l$. We ask whether $H^{m}\left(l\right)\equiv0\pmod{2^{N}}$.
Our analysis shows that, if $m$ is even, $H^{m}$ is well defined
on $\mathbb{Z}/2^{N}\mathbb{Z}$ and that a solution modulus $2^{N}$
can be lifted uniquely to a solution modulus $2^{N+1}$. In this
way we get a unique dyadic integer $l$ such that $H^{m}\left(l\right)=0$
(Propositions \ref{lem:positive-first-bits-m=00003D0(4)} and \ref{prop:positive-first-bits-m=00003D2(4)};
Theorems \ref{theo:UniquelyExists-Hlm=00003D0mod2N+1(m=00003D0)}
and \ref{theo:UniquelyExists-Hlm=00003D0mod2N+1(m=00003D2)}). This
idea is reminiscent of Hensel's Lemma. However, we cannot apply Hensel's
Lemma in our case since the nature of the coefficients in Taylor's
expansion of $H^{m}$ is unclear.

\subsubsection{Secondary Results}

We describe a second approach to Conjecture \ref{Conjecture:finite-share-zeros-b=00003D1},
under significant additional assumptions. We prove
\begin{thm}
\label{Theo:pq for q harmonic, p example case}Let $f:\mathbb{R}^{3}\rightarrow\mathbb{R}$
be a harmonic function. Then the product $\left(x^{2}+y^{2}-2z^{2}\right)\cdot f\left(x,y,z\right)$
is harmonic if and only if $f\left(x,y,z\right)=\alpha+\beta xyz+\gamma\left(x^{2}-y^{2}\right)z$
for some $\alpha,\beta,\gamma\in\mathbb{R}$.
\end{thm}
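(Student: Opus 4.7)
The plan is to reduce the statement to a first-order linear PDE for $f$ and then analyze that PDE degree-by-degree using the decomposition of $f$ into homogeneous harmonic polynomials.

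Set $q(x,y,z) = x^2+y^2-2z^2$ and apply the product rule $\Delta(qf) = q\,\Delta f + 2\,\nabla q\cdot\nabla f + f\,\Delta q$. Since $\Delta q = 0$, $\Delta f = 0$ by hypothesis, and $\nabla q = (2x, 2y, -4z)$, this collapses to
\[
\Delta(qf) \;=\; 4\bigl(x f_x + y f_y - 2z f_z\bigr).
\]
So the task becomes to classify those harmonic $f$ that are annihilated by the first-order operator $E := x\partial_x + y\partial_y - 2z\partial_z$.

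Next I would expand $f = \sum_{n\ge 0} f_n$ into homogeneous harmonic polynomials $f_n$ of degree $n$ (any entire harmonic function on $\mathbb{R}^3$ admits such an expansion, convergent uniformly on compacta). Although $E$ is not the Euler operator, it still preserves the total-degree grading, so $Ef=0$ decouples into $Ef_n=0$ for every $n$. Because $E$ acts on a monomial $x^a y^b z^c$ by the eigenvalue $a+b-2c$, the condition $Ef_n=0$ forces $f_n$ to be a linear combination of monomials with $a+b=2c$; combined with $a+b+c=n$ this gives $c=n/3$, so $f_n=0$ unless $3\mid n$.

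For $n = 3k$, every surviving monomial has exactly $z^k$ as its $z$-factor, so I can write $f_n = z^k\, g(x,y)$ with $g$ a homogeneous polynomial of degree $2k$ in $x,y$. A direct computation yields
\[
\Delta f_n \;=\; z^k\,\Delta_{xy} g \;+\; k(k-1)\,z^{k-2}\,g.
\]
The two summands live in linearly independent monomial spaces (distinct powers of $z$), so they cannot cancel each other. For $k\ge 2$ the prefactor $k(k-1)$ is nonzero, which forces $g=0$ and hence $f_n=0$. The surviving cases $k=0$ and $k=1$ give $f_0=\alpha$ (constant) and, from the two-dimensional space of harmonic quadratics in $x,y$ spanned by $xy$ and $x^2-y^2$, the expression $f_3 = \beta\,xyz + \gamma\,(x^2-y^2)z$. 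A direct substitution checks that these three functions indeed satisfy $Ef=0$, providing the converse and establishing the ``if and only if''.

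The only mildly delicate point, which I view as the main place for care rather than a genuine obstacle, is the degree-by-degree vanishing for $k\ge 2$: one must notice that within the kernel of $E$ the polynomial $f_n$ is automatically of the special form $z^k g(x,y)$, so that $\Delta f_n$ splits into two monomial-disjoint pieces and the argument for $g\equiv 0$ becomes immediate.
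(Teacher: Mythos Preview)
Your proof is correct and follows essentially the same route as the paper's: reduce $\Delta(qf)=0$ to the first-order condition $xf_x+yf_y-2zf_z=0$, decompose $f$ into homogeneous harmonic pieces, observe that each surviving piece must have the shape $z^{k}g(x,y)$ with $n=3k$, and then use harmonicity to kill all $k\ge 2$. The paper's write-up is slightly terser but the argument is the same step for step.
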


Under the same assumption we can also break the rotational symmetry
of the quadratic cone and get
\begin{thm}
\label{Theorem: pq for harmonic q and p with b}Let $f:\mathbb{R}^{3}\rightarrow\mathbb{R}$
be a harmonic function and $b>1$. Then the product $\left(x^{2}+by^{2}-\left(b+1\right)z^{2}\right)\cdot f\left(x,y,z\right)$
is harmonic if and only if $f\left(x,y,z\right)=\alpha+\beta xyz$
for some $\alpha,\beta\in\mathbb{R}$.
\end{thm}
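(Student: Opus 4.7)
The plan is to first reduce the harmonicity of $qf$ to a first order invariance equation for $f$, and then to iterate commutators with the Laplacian to force $f$ to be multilinear. Writing $q = x^2 + by^2 - (b+1)z^2$, one checks $\Delta q = 2 + 2b - 2(b+1) = 0$. Hence the product rule $\Delta(qf) = q\Delta f + 2\nabla q \cdot \nabla f + f \Delta q$ together with $\Delta f = 0$ shows that $qf$ is harmonic if and only if $Xf = 0$, where
\[
X \;=\; x\,\partial_x + b\,y\,\partial_y - (b+1)\,z\,\partial_z.
\]

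The key step is a commutator computation. Using $[\partial_i^2, x_i\partial_i] = 2\partial_i^2$, a direct calculation gives
\[
[\Delta, X] = 2\bigl(\partial_x^2 + b\,\partial_y^2 - (b+1)\partial_z^2\bigr), \qquad \bigl[[\Delta, X], X\bigr] = 4\bigl(\partial_x^2 + b^2\partial_y^2 + (b+1)^2\partial_z^2\bigr).
\]
Since $\Delta f = 0$ and $Xf = 0$, both iterated commutators annihilate $f$, producing together with $\Delta f = 0$ a $3\times 3$ linear system in $(\partial_x^2 f, \partial_y^2 f, \partial_z^2 f)$ whose coefficient matrix is the Vandermonde matrix associated to the weights $1, b, -(b+1)$ of $X$. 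For $b>1$ these three weights are pairwise distinct, so the Vandermonde determinant $(b-1)(b+2)(2b+1)$ is nonzero and consequently $\partial_x^2 f = \partial_y^2 f = \partial_z^2 f = 0$.

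Being harmonic, $f$ is real-analytic, so the vanishing of all three pure second derivatives forces $f$ to be multilinear: $f = \sum_{(i,j,k)\in\{0,1\}^3} c_{ijk}\, x^i y^j z^k$. Each of these eight basis monomials is an eigenvector of $X$ with eigenvalue $i + bj - (b+1)k$, and for $b>1$ one checks directly that this eigenvalue vanishes only at $(i,j,k) = (0,0,0)$ and $(1,1,1)$. The invariance $Xf=0$ therefore yields $f = \alpha + \beta xyz$, and the converse is immediate since $xyz$ is harmonic and $X(xyz)=0$. The main obstacle is conceptual: because $X$ does not commute with $\Delta$, it is not a priori clear that harmonicity combined with $X$-invariance imposes any finite-dimensional constraint on $f$. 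The saving algebraic observation is that the bracket $[\,\cdot\,, X]$ rescales each $\partial_i^2$ summand by the corresponding weight, turning the problem into a Vandermonde system that is nondegenerate precisely under the hypothesis $b>1$ (and which would fail at the excluded values $b\in\{1,-2,-1/2\}$, where the weights collide).
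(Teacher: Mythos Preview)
Your argument is correct. The commutator computation is accurate, the Vandermonde system is set up and solved correctly, and the final eigenvalue check on the eight multilinear monomials is complete for $b>1$.

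The paper does not spell out a proof here; it only says the argument is ``in the same spirit'' as the one for Theorem~\ref{Theo:pq for q harmonic, p example case} and defers the details to \cite{adisMasters}. That argument decomposes $f$ into homogeneous pieces of degree $d$, reads off from $\nabla q\cdot\nabla f=0$ that every surviving monomial $x^iy^jz^k$ must satisfy $i+bj-(b+1)k=0$, combines this with $i+j+k=d$, and then uses harmonicity of $f$ to pin down the admissible degrees $d$. For the symmetric cone ($b=1$) the integer constraints collapse neatly to $3k=d$, but for general rational $b$ this monomial bookkeeping is visibly messier. Your route sidesteps that entirely: instead of exploiting homogeneity, you iterate the bracket $[\,\cdot\,,X]$, which rescales each $\partial_i^2$ by its weight and produces a Vandermonde system in $(\partial_x^2 f,\partial_y^2 f,\partial_z^2 f)$. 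The nondegeneracy is then a single determinant $(b-1)(b+2)(2b+1)\neq 0$, uniform in $b>1$, and the remaining multilinear analysis is an eight-case check. What you gain is a clean, coordinate-free mechanism that works for any triple of distinct weights and isolates exactly where the hypothesis $b>1$ is used; what the paper's monomial method buys is that it stays inside the elementary language of the proof of Theorem~\ref{Theo:pq for q harmonic, p example case} and makes the connection to homogeneous harmonic polynomials explicit.
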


The additional assumption on the harmonicity of $f$ lets us give
a proof of Theorems \ref{Theo:pq for q harmonic, p example case}
and \ref{Theorem: pq for harmonic q and p with b} without arithmetic
considerations. Perhaps our assumptions on $f$ can be weakened.

\section*{Acknowledgments}

We are grateful to Charles Fefferman from whose discussions with the
first author the work on this topic originated. We thank Zeev Rudnick
for his interest in the present work and for helpful comments. We
thank Eran Asaf, Nir Avni, Alexander Logunov, Eugenia Malinnikova
and Amit Ophir for interesting discussions. This paper is part of
the second author\textquoteright s research towards a Ph.D.\ dissertation,
conducted at the Hebrew University of Jerusalem. The cases of $m=2$
and odd $m$ in Theorem \ref{thm:up_to_one_Plm_in_cases} together
with Theorems \ref{Theo:pq for q harmonic, p example case} and~\ref{Theorem: pq for harmonic q and p with b}
were proved in \cite{adisMasters}. We gratefully acknowledge the
support of ISF grant no.\ 753/14.\newpage{}

\section{Step 1: Holt-Ille Transformation\label{subsec:Holt-Ille-transformation}}

In this section we transform the associated Legendre function $P_{l}^{m}$
to a polynomial $H_{l}^{m}$ of degree $\lfloor\frac{l-m}{2}\rfloor$.
The main property of $H_{l}^{m}$ is that its coefficients are dyadic
integers, making it useful in analyzing whether $P_{2}$ and $P_{l}^{m}$
share a common root. This idea was developed by Holt \cite{Holt}
and Ille \cite{Ille} for the case $m=0$ and we extend it here to
the cases where $m\neq0$. 

We recall the following integral representation of the associated
Legendre functions:
\begin{equation}
P_{l}^{m}\left(x\right)=i^{m}\frac{\left(l+m\right)!}{l!}\frac{1}{\pi}\int_{0}^{\pi}\left(x+y\cos\varphi\right)^{l}\cos\left(m\varphi\right)d\varphi\label{eq:plm-integral}
\end{equation}

for $x\in\left[-1,1\right]$, where $y=i\sqrt{1-x^{2}}$ \cite[Ch.VII, p.\ 505]{courant-intrep}.
By Lemma \ref{Lemma-integral-cosine} the integral on the right hand
side is of the form $x^{\delta}y^{m}Q\left(x^{2},y^{2}\right)$ where
$\delta\in\left\{ 0,1\right\} $ with $\delta\equiv\left(l-m\right)\pmod{2}$
and $Q$ is a real homogeneous polynomial of degree $\left\lfloor \frac{l-m}{2}\right\rfloor $.
If we define 
\[
z=\frac{4x^{2}}{x^{2}-1}\:,
\]
then we can express $x^{2}$ and $y^{2}$ by

\begin{equation}
x^{2}=\frac{z}{z-4},\,\,\,\,y^{2}=\frac{4}{z-4}\,.\label{eq:x,y_by_z}
\end{equation}
Substituting the preceding expressions in $Q$ gives 
\begin{equation}
P_{l}^{m}\left(x\right)=x^{\delta}y^{m}\frac{C_{l}^{m}}{\left(z-4\right)^{\left\lfloor \frac{l-m}{2}\right\rfloor }}H_{l}^{m}\left(z\right)\label{eq:P-from-H}
\end{equation}
where $H_{l}^{m}\left(z\right)$ is a polynomial of degree $\left\lfloor \frac{l-m}{2}\right\rfloor $
normalized so that $H_{l}^{m}\left(0\right)=1$ and the constant $C_{l}^{m}$
depends on $l$ and $m$ (for details see proof of Lemma \ref{lemma:forms of Hlm}
in §\ref{sec:Technical-Lemmas}). We note
\begin{prop}
$P_{2}|P_{l}^{m}$ if and only if $H_{l}^{m}\left(-2\right)=0$.\label{prop:P2|Plm-iff-Hlm(-2)=00003D0}
\end{prop}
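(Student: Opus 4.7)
The plan is to exploit the fact that the change of variables $z = 4x^{2}/(x^{2}-1)$ sends the zero set of $P_{2}$ to the single point $z = -2$, and then to observe that all factors in formula~\eqref{eq:P-from-H} other than $H_{l}^{m}(z)$ are nonvanishing at $x = \pm 1/\sqrt{3}$.

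First, I would verify that $P_{2}(x) = \tfrac{1}{2}(3x^{2}-1)$ has the two distinct simple real roots $x = \pm 1/\sqrt{3}$, and that at each of these $x^{2} = 1/3$ gives $z = (4/3)/(-2/3) = -2$; conversely, $z(x) = -2$ forces $x^{2} = 1/3$. In particular, for any polynomial $q \in \mathbb{R}[x]$, $P_{2} | q$ if and only if $q$ vanishes at both roots of $P_{2}$, equivalently at every $x$ with $z(x) = -2$.

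Next, I would evaluate \eqref{eq:P-from-H} at $x = \pm 1/\sqrt{3}$. Setting $N := \lfloor (l-m)/2 \rfloor$, each of the factors $x^{\delta}$, $y^{m} = i^{m}(2/3)^{m/2}$, $C_{l}^{m}$, and $(z-4)^{-N} = (-6)^{-N}$ is nonzero. Hence $P_{l}^{m}(\pm 1/\sqrt{3}) = 0$ if and only if $H_{l}^{m}(-2) = 0$. For $m$ even, $P_{l}^{m}$ is a polynomial in $x$, and combining with the previous paragraph gives $P_{2} | P_{l}^{m}$ iff $H_{l}^{m}(-2) = 0$ directly. For $m$ odd, the definition in the paper recasts $P_{2} | P_{l}^{m}$ as $P_{2}$ dividing the polynomial $P_{l}^{m}/\sqrt{1-x^{2}}$; since $\sqrt{1-x^{2}} = \sqrt{2/3} \neq 0$ at $x = \pm 1/\sqrt{3}$, the same vanishing equivalence applies to this polynomial, and the conclusion follows in both parities.

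There is essentially no obstacle once \eqref{eq:P-from-H} is in hand. The one point that might warrant a brief comment is that \eqref{eq:P-from-H} is a genuine polynomial identity (for $m$ even) or polynomial-times-$\sqrt{1-x^{2}}$ identity (for $m$ odd), rather than merely a pointwise analytic one; this is ensured by the construction of $H_{l}^{m}$ in the paper via the substitution $z^{k} = (4x^{2})^{k}/(x^{2}-1)^{k}$, which shows that $(x^{2}-1)^{N}H_{l}^{m}(z)$ is a polynomial in $x^{2}$.
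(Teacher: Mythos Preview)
Your proof is correct and follows essentially the same route as the paper's: identify the roots of $P_{2}$, observe they correspond to $z=-2$ under the substitution, and then read off the equivalence from \eqref{eq:P-from-H} since the remaining factors are nonzero there. The paper's version is merely terser---it checks only at $x=1/\sqrt{3}$ (implicitly using that $P_{l}^{m}$, respectively $P_{l}^{m}/\sqrt{1-x^{2}}$, has definite parity in $x$) and does not separate the parity of $m$---while you spell out these points explicitly.
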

\begin{proof}
$P_{2}\left(x\right)=\frac{1}{2}\left(3x^{2}-1\right)$. Hence $P_{2}|P_{l}^{m}$
if and only if $P_{l}^{m}\left(\frac{1}{\sqrt{3}}\right)=0$. The
proposition now follows from (\ref{eq:x,y_by_z}) and (\ref{eq:P-from-H}).
\end{proof}
\begin{notation}
We denote by $\sigma_{l}^{m}\left(k\right)$ coefficients such that
the following holds\label{not:sigma-l-m}
\[
H_{l}^{m}\left(z\right)=\sum_{k=0}^{\left\lfloor \frac{l-m}{2}\right\rfloor }\left(-\frac{z}{2}\right)^{k}\sigma_{l}^{m}\left(k\right)
\]
\end{notation}
The formulas for $\sigma_{l}^{m}\left(k\right)$ are recorded in the
following lemma.
\begin{lem}
\label{lemma:forms of Hlm}If $l+m$ is even, then
\begin{equation}
\sigma_{l}^{m}\left(k\right)=\frac{\left(-2\right)^{k}{\left\lfloor \frac{l-m}{2}\right\rfloor  \choose k}{\left\lfloor \frac{l+m}{2}\right\rfloor  \choose k}}{{2k \choose k}}\label{eq:sigma_l+m_even}
\end{equation}
 and if $l+m$ is odd, then
\begin{equation}
\sigma_{l}^{m}\left(k\right)=\frac{\left(-2\right)^{k}{\left\lfloor \frac{l-m}{2}\right\rfloor  \choose k}{\left\lfloor \frac{l+m}{2}\right\rfloor  \choose k}}{{2k \choose k}\left(2k+1\right)}\,.\label{eq:sigma_l+m_odd}
\end{equation}

In both cases 
\begin{equation}
\sigma_{l}^{m}\left(0\right)=1\,.\label{eq:sig(0)=00003D1}
\end{equation}
\end{lem}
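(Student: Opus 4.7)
The plan is to derive the stated coefficient formulas directly from \eqref{eq:plm-integral} by binomial expansion, evaluation of the resulting cosine integrals, and the substitution (\ref{eq:x,y_by_z}).

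First I would expand
\[
(x+y\cos\varphi)^{l}=\sum_{j=0}^{l}\binom{l}{j}x^{l-j}y^{j}\cos^{j}\varphi,
\]
exchange summation and integration in (\ref{eq:plm-integral}), and apply Lemma~\ref{Lemma-integral-cosine} to evaluate each $\int_{0}^{\pi}\cos^{j}\varphi\cos(m\varphi)\,d\varphi$. The surviving indices are $j=m+2k$ with $0\le k\le N:=\lfloor(l-m)/2\rfloor$, and factoring $x^{\delta}y^{m}$ out of $x^{l-j}y^{j}$ with $\delta\equiv l-m\pmod{2}$ writes the integral as $x^{\delta}y^{m}\,Q(x^{2},y^{2})$, where $Q$ is a polynomial of degree $N$ whose coefficient $a_{k}$ of $(x^{2})^{N-k}(y^{2})^{k}$ is an explicit product of two binomial coefficients divided by $4^{k}$ (up to a common prefactor depending only on $l,m$).

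Next I would apply (\ref{eq:x,y_by_z}), turning each $(x^{2})^{N-k}(y^{2})^{k}$ into $z^{N-k}\cdot 4^{k}/(z-4)^{N}$, so that $Q$ becomes a polynomial in $z$ of degree $N$ divided by $(z-4)^{N}$. Only the $k=N$ summand contributes at $z=0$, so dividing through by its value yields a polynomial $H_{l}^{m}(z)$ with $H_{l}^{m}(0)=1$, which gives (\ref{eq:sig(0)=00003D1}) and absorbs all remaining prefactors into $C_{l}^{m}$, producing (\ref{eq:P-from-H}). Equating the coefficient of $z^{k}$ in $H_{l}^{m}(z)$ with $\sigma_{l}^{m}(k)(-1)^{k}/2^{k}$ (read off from Notation~\ref{not:sigma-l-m}) reduces the problem to simplifying the ratio $a_{N-k}/a_{N}$.

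The final step is where the parity of $l+m$ enters, through the ``anchor'' index $k=N$: when $l+m$ is even we have $m+2N=l$, so $\binom{l}{m+2N}\binom{m+2N}{N}=\binom{l}{(l-m)/2}$, and the ratio telescopes using $(2k)!=\binom{2k}{k}(k!)^{2}$ to produce (\ref{eq:sigma_l+m_even}); when $l+m$ is odd we instead have $m+2N=l-1$, the anchor acquires an extra factor of $l$, and $(2k+1)!$ rather than $(2k)!$ appears in the denominator, yielding the extra $1/(2k+1)$ in (\ref{eq:sigma_l+m_odd}) via the identity $(2k+1)!=(2k+1)\binom{2k}{k}(k!)^{2}$. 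I expect the main obstacle to be this last bookkeeping step --- in particular, verifying that the parity dichotomy in the formula is captured cleanly by whether $m+2N$ equals $l$ or $l-1$, and that the factorial ratios collapse to the stated binomial form --- while everything preceding it is a routine consequence of Lemma~\ref{Lemma-integral-cosine}.
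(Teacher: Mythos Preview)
Your proposal is correct and follows essentially the same route as the paper: expand the integrand in \eqref{eq:plm-integral} by the binomial theorem, evaluate term-by-term with Lemma~\ref{Lemma-integral-cosine}, substitute \eqref{eq:x,y_by_z}, and normalize so that $H_{l}^{m}(0)=1$. The paper carries out exactly this computation, with the parity of $l+m$ entering through the parameter $\delta$ in $(2k+\delta)!$, which is the same mechanism you describe via whether $m+2N$ equals $l$ or $l-1$.
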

A proof for these formulas is included in §\ref{sec:Technical-Lemmas}.

\section{Proof of Theorem \ref{thm:up_to_one_Plm_in_cases}: The case of odd
$m$\label{sec:odd-m}}

It will be convenient to use the following

\paragraph{Notation. }

We denote 
\begin{equation}
s=\left\lfloor \frac{l}{2}\right\rfloor ,\,t=\left\lfloor \frac{m}{2}\right\rfloor \,.\label{eq:s,t=00003Dl,m/2}
\end{equation}

We first assume that $l$ is odd. We rewrite formula (\ref{eq:sigma_l+m_even})
as follows,
\[
\sigma_{2s+1}^{2t+1}\left(k\right)=\left(-1\right)^{k}{s-t \choose k}\frac{\left(s+t+1\right)\left(s+t\right)...\left(s+t-k+2\right)}{1\cdot3\cdot...\cdot\left(2k-1\right)}\,.
\]

The term $\sigma_{l}^{m}\left(1\right)$ is even, since either $\left(s-t\right)$
or $\left(s+t+1\right)$ is even. 

For $k\geq2$ we have that $\sigma_{l}^{m}\left(k\right)$ is even,
since there are at least two consecutive numbers at the nominator
of the second factor, and no even numbers in the denominator. 

Combined with (\ref{eq:sig(0)=00003D1}), it follows that $H_{l}^{m}\left(-2\right)=\sum_{k}\sigma_{l}^{m}\left(k\right)\equiv1\,\pmod{2}$.
From Proposition \ref{prop:P2|Plm-iff-Hlm(-2)=00003D0} we get that
$\mbox{\ensuremath{P_{2}\nmid P_{l}^{m}}}$.

If $l$ is even, essentially the same argument holds replacing formula
(\ref{eq:sigma_l+m_even}) by formula (\ref{eq:sigma_l+m_odd}). We
leave the details to the reader.

\section{Proof of Theorem \ref{thm:up_to_one_Plm_in_cases}: The case of $\mbox{\ensuremath{m\equiv0\pmod{4}}}$\label{sec:Proof-of-m=00003D0}}

\subsection{Recovering the lowest three bits of $l$}
\begin{prop}
If $H_{l}^{m}\left(-2\right)\equiv0\pmod{8}$ and $m\equiv0\pmod{4}$
then $l\equiv2\pmod{8}$.\label{prop:m=00003D0(4)->l=00003D2(8)}
\end{prop}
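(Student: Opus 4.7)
The plan is to compute $H_l^m(-2)$ modulo $8$ explicitly and then enumerate the residue classes of $l \pmod 8$, showing that vanishing mod $8$ forces $l \equiv 2 \pmod 8$. First, I would truncate the sum: by Kummer's theorem, $v_2\binom{2k}{k} = s_2(k)$, the binary digit sum of $k$, so the prefactor $(-2)^k/\binom{2k}{k}$ appearing in the formulas of Lemma~\ref{lemma:forms of Hlm} has $2$-adic valuation $k - s_2(k)$, which is at least $3$ once $k \geq 4$. Since the remaining factor $\binom{\lfloor (l-m)/2\rfloor}{k}\binom{\lfloor (l+m)/2\rfloor}{k}/(2k+1)^{\varepsilon}$ lies in $\mathbb{Z}_{(2)}$ (the factor $2k+1$ being a $2$-adic unit), each $\sigma_l^m(k)$ with $k \geq 4$ vanishes mod $8$, and
$$H_l^m(-2) \equiv 1 + \sigma_l^m(1) + \sigma_l^m(2) + \sigma_l^m(3) \pmod 8,$$
with the convention that terms of index exceeding $\lfloor (l-m)/2\rfloor$ are zero.

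Next, I would make the right-hand side explicit. Writing $s = \lfloor l/2\rfloor$ and $t = \lfloor m/2\rfloor$ (with $t$ even by the hypothesis $m \equiv 0 \pmod 4$) and substituting (\ref{eq:sigma_l+m_even}) or (\ref{eq:sigma_l+m_odd}) according to the parity of $l$, each $\sigma_l^m(k)$, $k \in \{1,2,3\}$, becomes a polynomial in $s$ and $t$ divided by a small odd integer (such as $3$, $5$, $6$, $15$, $35$), which is invertible modulo $8$. The resulting expression is a polynomial in $s, t$ whose residue mod $8$ depends only on $l \pmod 8$ and $m \pmod 8$.

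Finally, I would run a case analysis. Split on the parity of $l$ to select the correct formula, and on $m \pmod 8 \in \{0,4\}$. For each of the seven residue classes of $l \pmod 8$ different from $2$, verify that the explicit expression above is a non-zero element of $\mathbb{Z}/8\mathbb{Z}$. In line with the subsection title ``Recovering the lowest three bits of $l$'', this can be organized hierarchically: first show $l$ must be even by computing $H_l^m(-2) \pmod 2$; then refine to $l \equiv 2 \pmod 4$; then to $l \equiv 2 \pmod 8$.

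The main obstacle is the combinatorial bookkeeping in the last step. The $2$-adic valuations of the factors $s \pm t$, $s \pm t - 1$, $s \pm t - 2$ depend piecewise on the residues of $s$ and $t$, so one has to keep careful track of which of $\sigma_l^m(2)$ and $\sigma_l^m(3)$ actually contribute modulo $8$ in each case, and verify that their contribution never conspires with $1 + \sigma_l^m(1)$ to produce a zero outside $l \equiv 2 \pmod 8$. Small boundary cases where $\lfloor(l-m)/2\rfloor < 3$ are automatically absorbed since the relevant binomial coefficient then vanishes.
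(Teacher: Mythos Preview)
Your proposal is correct and is essentially the paper's own argument: truncate to $k\le 3$ via $v_2(\sigma_l^m(k))\ge v_2(k!)$ (the paper's Lemma~\ref{lem:power2 bigger than !}, which is exactly your Kummer/Legendre observation), then evaluate $\sigma_l^m(1),\sigma_l^m(2),\sigma_l^m(3)$ case by case in $l$.

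One small correction to your suggested organization: computing $H_l^m(-2)\pmod 2$ does \emph{not} force $l$ to be even. For $l\equiv 3\pmod 4$ (so $s$ odd, $t$ even, formula~(\ref{eq:sigma_l+m_odd})) one has $\sigma_l^m(1)=-(s-t)(s+t)/3$ odd, while $\sigma_l^m(2),\sigma_l^m(3)$ are even, giving $H_l^m(-2)\equiv 0\pmod 2$. The paper's actual hierarchy is: mod~$2$ rules out $l\equiv 0,1\pmod 4$ (both $s$ and $t$ even makes the factor $s+t$ kill $\sigma_l^m(k)$ for $k\ge 1$); mod~$4$ rules out $l\equiv 3\pmod 4$; and only then does mod~$8$ distinguish $l\equiv 2$ from $l\equiv 6\pmod 8$. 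Apart from this relabeling of which residues are eliminated at which stage, your plan matches the paper.
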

The proof is by ruling out the other possibilities one by one.
\begin{lem}
If $m\equiv0\pmod{4}$ and either $l\equiv0\pmod{4}$ or $l\equiv1\pmod{4}$
then $H_{l}^{m}\left(-2\right)\equiv1\pmod{2}$.\label{lem:m=00003D0(4),l=00003D0(4)}
\end{lem}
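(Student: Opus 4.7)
The plan is to expand $H_l^m(-2) = \sum_{k \geq 0} \sigma_l^m(k)$ via Notation~\ref{not:sigma-l-m} and show that modulo $2$ only the $k=0$ term survives. Since $\sigma_l^m(0) = 1$ by~(\ref{eq:sig(0)=00003D1}), the claim reduces to verifying $\sigma_l^m(k) \equiv 0 \pmod 2$ for every $k \geq 1$.

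I would first dispose of the tail $k \geq 2$ by a uniform $2$-adic valuation argument that does not use the residues of $l$ and $m$. By Kummer's theorem, $v_2\!\left(\binom{2k}{k}\right)$ equals the binary digit sum $s_2(k)$, so Legendre's formula gives $v_2\!\left((-2)^k/\binom{2k}{k}\right) = k - s_2(k) = v_2(k!) \geq 1$ for all $k \geq 2$. Since $2k+1$ is a unit in $\mathbb{Z}_2$ and the binomial coefficients in the numerators of~(\ref{eq:sigma_l+m_even}) and~(\ref{eq:sigma_l+m_odd}) are ordinary integers (the floors leave integer arguments because $l$ and $m$ have controlled parity), both formulas of Lemma~\ref{lemma:forms of Hlm} yield $\sigma_l^m(k) \equiv 0 \pmod 2$ for every $k \geq 2$.

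The interesting index is $k = 1$, where the hypotheses on $l \pmod 4$ and $m \pmod 4$ enter directly. If $l \equiv 0 \pmod 4$, write $l = 4j$ and $m = 4i$, so $l+m$ is even and $(l \pm m)/2 = 2(j \pm i)$; formula~(\ref{eq:sigma_l+m_even}) then yields $\sigma_l^m(1) = -4(j^2 - i^2)$. If $l \equiv 1 \pmod 4$, write $l = 4j+1$ and $m = 4i$, so $l+m$ is odd and $\lfloor(l \pm m)/2\rfloor = 2(j \pm i)$; formula~(\ref{eq:sigma_l+m_odd}) then yields $\sigma_l^m(1) = -4(j^2 - i^2)/3$, which is a dyadic integer divisible by $4$ since $3$ is a unit in $\mathbb{Z}_2$. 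In either case $\sigma_l^m(1)$ is even.

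Combining these contributions gives $H_l^m(-2) \equiv \sigma_l^m(0) = 1 \pmod 2$, as desired. The only real obstacle is cosmetic bookkeeping: one must carry both formulas of Lemma~\ref{lemma:forms of Hlm} in parallel, depending on the parity of $l+m$. The arithmetic content of the hypotheses $l \pmod 4$ and $m \equiv 0 \pmod 4$ is concentrated entirely at the single index $k=1$, while the tail $k \geq 2$ is dispatched uniformly by the valuation estimate.
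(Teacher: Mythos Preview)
Your proof is correct. Both you and the paper reduce to showing $\sigma_l^m(k)\equiv 0\pmod 2$ for every $k\geq 1$, but the decompositions differ. The paper notes that under the hypotheses both $s=\lfloor l/2\rfloor$ and $t=\lfloor m/2\rfloor$ are even, rewrites
\[
\sigma_l^m(k)\equiv\binom{s-t}{k}(s+t)(s+t-1)\cdots(s+t-k+1)\pmod 2,
\]
and observes that the even factor $(s+t)$ kills every $k\geq 1$ at once. You instead dispatch the tail $k\geq 2$ by the valuation estimate $v_2\bigl((-2)^k/\binom{2k}{k}\bigr)=v_2(k!)\geq 1$ (which is exactly Lemma~\ref{lem:power2 bigger than !} and uses none of the hypotheses), and then plug in the residues of $l$ and $m$ only at the single index $k=1$. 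Your organization makes precise your closing remark that the arithmetic content of the hypotheses lives entirely at $k=1$; the paper's version is slightly more economical because it handles all $k\geq 1$ with one observation and avoids the case split on the parity of $l+m$.
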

\begin{proof}
In these cases both $s$ and $t$ (see (\ref{eq:s,t=00003Dl,m/2}))
are even. Rewriting formulas (\ref{eq:sigma_l+m_even}) and (\ref{eq:sigma_l+m_odd})
modulus 2, we get
\[
\sigma_{l}^{m}\left(k\right)\equiv{s-t \choose k}\left(s+t\right)\left(s+t-1\right)...\left(s+t-k+1\right)\pmod{2}\,.
\]
For every $k\geq1$ we have that $\sigma_{l}^{m}\left(k\right)$ is
even, since it has the even factor $\left(s+t\right)$. Summing over
$k\geq0$ we get $H_{l}^{m}\left(-2\right)\equiv1\pmod{2}$.
\end{proof}
\begin{lem}
If $m\equiv0\pmod{4}$ and $l\equiv3\pmod{4}$ then $H_{l}^{m}\left(-2\right)\equiv2\pmod{4}$.
\end{lem}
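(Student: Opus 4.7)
The proof will strengthen the preceding lemma's mod-$2$ computation to a mod-$4$ one. Since $l$ is odd and $m$ is even, we apply formula (\ref{eq:sigma_l+m_odd}) with $s = (l-1)/2$ and $t = m/2$; the hypotheses $l \equiv 3 \pmod 4$ and $m \equiv 0 \pmod 4$ make $s$ odd and $t$ even, so both $s-t$ and $s+t$ are odd. All computations take place in $\mathbb{Z}_{2}$, in which the odd denominator $(2k+1)!!$ appearing in $\sigma_{l}^{m}(k)$ is a unit.

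The sum $H_{l}^{m}(-2) = \sum_{k=0}^{s-t} \sigma_{l}^{m}(k)$ splits naturally into three ranges. For $k \geq 4$ the bound $v_{2}(\sigma_{l}^{m}(k)) \geq v_{2}(k!) \geq 3$, which is read off from $\sigma_{l}^{m}(k) = (-1)^{k}\binom{s-t}{k}\binom{s+t}{k}\,k!/(2k+1)!!$, kills the contribution mod $4$. The first two terms give $\sigma_{l}^{m}(0) = 1$ and $\sigma_{l}^{m}(1) = -(s^{2}-t^{2})/3$; the latter is $\equiv 1 \pmod 4$ because $s^{2} \equiv 1 \pmod 8$, $t^{2} \equiv 0 \pmod 4$, and $3^{-1} \equiv 3 \pmod 4$. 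Their sum is therefore $\equiv 2 \pmod 4$.

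The main obstacle is the pair $\sigma_{l}^{m}(2) + \sigma_{l}^{m}(3)$, since each of the two summands can individually be $\equiv 2 \pmod 4$ (as already happens for $(l,m)=(7,0)$), so some cancellation is needed. Reducing the explicit formulas gives $\sigma_{l}^{m}(2) \equiv 2\binom{s-t}{2}\binom{s+t}{2}$ and $\sigma_{l}^{m}(3) \equiv 2\binom{s-t}{3}\binom{s+t}{3}$ modulo $4$. The key input is Lucas's theorem in the following form: for every odd integer $n$, both $\binom{n}{2}$ and $\binom{n}{3}$ are odd if and only if $n \equiv 3 \pmod 4$, so they always share the same parity. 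Applied to $n = s-t$ and $n = s+t$ (both odd), this forces the products $\binom{s-t}{2}\binom{s+t}{2}$ and $\binom{s-t}{3}\binom{s+t}{3}$ to have the same parity, so their sum is even and $\sigma_{l}^{m}(2) + \sigma_{l}^{m}(3) \equiv 0 \pmod 4$. Combining the three ranges yields $H_{l}^{m}(-2) \equiv 2 \pmod 4$.
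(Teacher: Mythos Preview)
Your proof is correct. The treatment of $k=0,1$ and $k\ge 4$ is the same as in the paper; the only real difference is the handling of the pair $\sigma_l^m(2)+\sigma_l^m(3)$. The paper does not compute $\sigma_l^m(2)$ and $\sigma_l^m(3)$ separately: it simply writes
\[
\sigma_l^m(3)=-\sigma_l^m(2)\cdot\frac{(s-t-2)(s+t-2)}{3\cdot 7},
\]
observes that the displayed fraction is a $2$-adic unit (both numerator factors are odd since $s-t$ and $s+t$ are odd), and uses the fact that $\sigma_l^m(2)$ is even to conclude $\sigma_l^m(3)\equiv-\sigma_l^m(2)\pmod 4$ directly. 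Your route---reducing each term to $2\binom{s\mp t}{2}\binom{s\pm t}{2}$ and $2\binom{s\mp t}{3}\binom{s\pm t}{3}$ and then matching parities via Lucas---is a valid alternative, but it is longer and introduces an extra tool. The recursion argument is the one reused throughout the paper (e.g.\ in the mod~$8$ computations and in the lifting step), so it fits more naturally into the overall strategy.
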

\begin{proof}
Here $s$ is odd and $t$ is even. We calculate $H_{2s+1}^{2t}\left(-2\right)\pmod{4}$
using formula (\ref{eq:sigma_l+m_odd}). 
\[
\sigma_{2s+1}^{2t}\left(1\right)=-\frac{\left(s-t\right)\left(s+t\right)}{3}=-\frac{s^{2}-t^{2}}{3}\equiv-\frac{1-0}{-1}\equiv1\pmod{4}
\]

and
\[
\sigma_{2s+1}^{2t}\left(3\right)=-\sigma_{2s+1}^{2t}\left(2\right)\frac{\left(s-t-2\right)\left(s+t-2\right)}{3\cdot7}\equiv-\sigma_{2s+1}^{2t}\left(2\right)\pmod{4}\,.
\]

From basic divisibility properties (Lemma \ref{lem:power2 bigger than !})
we also have

\[
\forall k\geq4,\:\sigma_{l}^{m}\left(k\right)\equiv0\pmod{4}\:.
\]

Summing over $k\geq0$ we get $H_{2s+1}^{2t}\left(-2\right)\equiv2\pmod{4}$.
\end{proof}
\begin{lem}
If $m\equiv0\pmod{4}$ and $l\equiv6\pmod{8}$ then $H_{l}^{m}\left(-2\right)\equiv4\pmod{8}$.\label{lem:l=00003D2mod8forM=00003D0mod4}
\end{lem}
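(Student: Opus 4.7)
The plan is to compute $H_l^m(-2)$ modulo $8$ by reducing to a finite sum of low-index terms and then evaluating them case by case. With the notation $s = \lfloor l/2 \rfloor$ and $t = \lfloor m/2 \rfloor$ of (\ref{eq:s,t=00003Dl,m/2}), the hypotheses $l \equiv 6 \pmod 8$ and $m \equiv 0 \pmod 4$ translate to $s \equiv 3 \pmod 4$ and $t$ even. Since $l+m$ is even, formula (\ref{eq:sigma_l+m_even}) applies.

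The first step is to apply Lemma \ref{lem:power2 bigger than !} to conclude that $\sigma_l^m(k) \equiv 0 \pmod 8$ for every $k \geq 4$, so that
\[
H_l^m(-2) \equiv \sigma_l^m(0) + \sigma_l^m(1) + \sigma_l^m(2) + \sigma_l^m(3) \pmod 8.
\]
The remaining terms admit a clean rewriting using the elementary identity $(s-t-i)(s+t-i) = (s-i)^2 - t^2$, namely
\[
\sigma_l^m(k) = \frac{(-2)^k \prod_{i=0}^{k-1}\bigl((s-i)^2 - t^2\bigr)}{(2k)!}.
\]
From $s \equiv 3 \pmod 4$ one reads off $s^2 \equiv 1$, $(s-1)^2 \equiv 4$, and $(s-2)^2 \equiv 1$ modulo $8$.

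The last step is to split into two subcases according to $m \pmod 8$. If $m \equiv 0 \pmod 8$, then $t^2 \equiv 0 \pmod{16}$; writing $s = 4j+3$ and $t = 4w$, one finds $(s-1)^2 - t^2 = 4N$ with $N = (2j+1)^2 - 4w^2$ odd and $\equiv 1 \pmod 4$. Using that $3$ and $5$ are units modulo $8$, a short direct calculation then yields $\sigma_l^m(1) \equiv 7$, $\sigma_l^m(2) \equiv 6$, and $\sigma_l^m(3) \equiv 6 \pmod 8$, summing to $20 \equiv 4 \pmod 8$. If instead $m \equiv 4 \pmod 8$, write $t = 2u$ with $u$ odd; then $(s-1)^2 - t^2 = 4\bigl((2j+1)^2 - u^2\bigr)$ is divisible by $32$, since a difference of two odd squares is divisible by $8$. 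This forces $\sigma_l^m(2), \sigma_l^m(3) \equiv 0 \pmod 8$, leaving $\sigma_l^m(0) + \sigma_l^m(1) \equiv 1 + (t^2 - s^2) \equiv 1 + 3 = 4 \pmod 8$.

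The main obstacle is the $2$-adic bookkeeping in the subcase $m \equiv 0 \pmod 8$: the denominators $6$ and $90$ in $\sigma_l^m(2)$ and $\sigma_l^m(3)$ each contribute a factor of $2$, so the $v_2$ of the numerator must be cancelled before the remaining odd part of the denominator can be inverted modulo $8$. The split into two subcases is essentially unavoidable, because the $2$-adic valuation of $(s-1)^2 - t^2$ jumps from $2$ to at least $5$ as $t/2$ changes parity, and this jump is precisely what makes both subcases land on the same residue $4$ modulo $8$.
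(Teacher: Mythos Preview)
Your argument is correct and follows essentially the same route as the paper: invoke Lemma~\ref{lem:power2 bigger than !} to discard $k\ge 4$, then evaluate $\sigma_l^m(k)$ for $k\le 3$ modulo~$8$ and sum. The only cosmetic difference is that the paper keeps $t$ symbolic and uses the recursion to obtain $\sigma_l^m(3)\equiv\sigma_l^m(2)\pmod 8$ in one stroke (from $(s-2)^2-t^2\equiv 1\pmod 4$ and the evenness of $\sigma_l^m(2)$), whereas you make the split on $m\pmod 8$ explicit; both lead to the same residue~$4$.
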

\begin{proof}
We let $l=8q+6$. By formula (\ref{eq:sigma_l+m_even})
\[
\sigma_{l}^{m}\left(1\right)=-\left(4q+3-t\right)\left(4q+3+t\right)\equiv t^{2}-1\pmod{8}\,,
\]
\[
\sigma_{l}^{m}\left(2\right)=-\sigma_{l}^{m}\left(1\right)\frac{\left(4q+2-t\right)\left(4q+2+t\right)}{2\cdot3}\equiv-\sigma_{l}^{m}\left(1\right)\frac{4-t^{2}}{2\cdot3}\pmod{8}
\]

and
\[
\sigma_{l}^{m}\left(3\right)=-\sigma_{l}^{m}\left(2\right)\frac{\left(4q+1\right)^{2}-t^{2}}{3\cdot5}\equiv\sigma_{l}^{m}\left(2\right)\pmod{8}\,,
\]
 where in the last calculation we used the fact that $\sigma_{l}^{m}\left(2\right)\equiv0\pmod{2}$
(Lemma \ref{lem:power2 bigger than !}). From basic divisibility properties
(Lemma \ref{lem:power2 bigger than !}) we have $\sigma_{l}^{m}\left(k\right)\equiv0\pmod{8}$
for all $k\geq4$. Summing over $k\geq0$ we get $H_{l}^{m}\left(-2\right)\equiv4\pmod{8}$.

\end{proof}
To be complete we verify the remaining case $l\equiv2\pmod{8}$.
\begin{prop}
If $m\equiv0\pmod{4}$ and $l\equiv2\pmod{8}$ then $H_{l}^{m}\left(-2\right)\equiv0\pmod{8}$.\label{lem:positive-first-bits-m=00003D0(4)}
\end{prop}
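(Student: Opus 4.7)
The plan is to follow the template of the preceding lemmas in this section: expand $H_l^m(-2)=\sum_{k\ge 0}\sigma_l^m(k)$ modulo $8$ using formula (\ref{eq:sigma_l+m_even}), show that the tail $k\ge 4$ contributes zero by Lemma \ref{lem:power2 bigger than !}, and then verify that $1+\sigma_l^m(1)+\sigma_l^m(2)+\sigma_l^m(3)\equiv 0\pmod 8$. Writing $l=8q+2$ and $m=4p$ gives $s=4q+1$ (odd) and $t=2p$ (even); since $l+m$ is even, the cited formula applies and all computations take place in the $2$-adic integers, where odd quantities may be inverted modulo any power of $2$.

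To compute the three nontrivial terms I would use the ratio
\[
\sigma_l^m(k+1)=-\sigma_l^m(k)\,\frac{(s-t-k)(s+t-k)}{(k+1)(2k+1)},
\]
which follows directly from (\ref{eq:sigma_l+m_even}). Since $s$ is odd one has $s^2\equiv(s-2)^2\equiv 1\pmod 8$ and $(s-1)^2=16q^2\equiv 0\pmod 8$. The first identity immediately yields $\sigma_l^m(1)\equiv t^2-1\pmod 8$ and a clean relation between $\sigma_l^m(3)$ and $\sigma_l^m(2)$ modulo $8$, while the second forces an extra factor of $2$ out of the numerator of $\sigma_l^m(2)$ coming from $(s-1)^2-t^2=16q^2-4p^2$. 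Combining these with the inversions $3^{-1}\equiv 3$ and $15^{-1}\equiv 7\pmod 8$ reduces each of $\sigma_l^m(1),\sigma_l^m(2),\sigma_l^m(3)$ to an explicit expression in $p$ and $q$ modulo $8$.

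The main obstacle, and really the only content of the proof, is that $t^2=4p^2\pmod 8$ depends on the parity of $p$ (equivalently, on whether $m\equiv 0$ or $4\pmod 8$), and this dependence propagates to all three terms. The argument therefore splits into two subcases according to the parity of $p$, and in each subcase one has to check that $1+\sigma_l^m(1)+\sigma_l^m(2)+\sigma_l^m(3)\equiv 0\pmod 8$. This is a finite mechanical verification in $\mathbb{Z}/8\mathbb{Z}$, but it is precisely where the exact cancellation to $0\pmod 8$ occurs, in contrast to the $l\equiv 6\pmod 8$ case of Lemma \ref{lem:l=00003D2mod8forM=00003D0mod4}, where the analogous sum produced $4\pmod 8$ and thus ruled out divisibility of $H_l^m(-2)$ by $8$.
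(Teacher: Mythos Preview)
Your proposal is correct and follows essentially the same route as the paper: compute $\sigma_l^m(k)$ for $k\le 3$ via the recurrence coming from (\ref{eq:sigma_l+m_even}), kill the tail $k\ge 4$ with Lemma~\ref{lem:power2 bigger than !}, and verify that the resulting sum vanishes modulo $8$. The paper's computation gives $\sigma_l^m(3)\equiv\sigma_l^m(2)\equiv\sigma_l^m(1)\cdot\tfrac{t^2}{6}\pmod 8$, so that $1+\sigma_l^m(1)+\sigma_l^m(2)+\sigma_l^m(3)\equiv t^2(t^2+2)/3\pmod 8$, which vanishes uniformly since $t=2p$ is even; hence your proposed case split on the parity of $p$, while perfectly valid, is not actually needed.
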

\begin{proof}
We let $l=8q+2$. By formula (\ref{eq:sigma_l+m_even})
\[
\sigma_{l}^{m}\left(1\right)=-\left(4q+1-t\right)\left(4q+1+t\right)\equiv t^{2}-1\pmod{8}\,,
\]
\[
\sigma_{l}^{m}\left(2\right)=-\sigma_{l}^{m}\left(1\right)\frac{\left(4q-t\right)\left(4q+t\right)}{2\cdot3}\equiv\sigma_{l}^{m}\left(1\right)\frac{t^{2}}{2\cdot3}\pmod{8}
\]

and
\[
\sigma_{l}^{m}\left(3\right)=-\sigma_{l}^{m}\left(2\right)\frac{\left(4q-1\right)^{2}-t^{2}}{3\cdot5}\equiv\sigma_{l}^{m}\left(2\right)\pmod{8}\,,
\]
From basic divisibility properties (Lemma \ref{lem:power2 bigger than !})
we have $\sigma_{l}^{m}\left(k\right)\equiv0\pmod{8}$ for all $k\geq4$.
Summing over $k\geq0$ we get $H_{l}^{m}\left(-2\right)\equiv0\pmod{8}$.
\end{proof}

\subsection{Recovering the high bits of $l$}

In this section we introduce an idea in the spirit of Hensel's lemma
to recover the high bits of $l$. 
\begin{thm}
\label{theo:UniquelyExists-Hlm=00003D0mod2N+1(m=00003D0)}Let $m\equiv0\pmod{4}$
and suppose that $H_{l}^{m}\left(-2\right)\equiv0\pmod{2^{N}}$. Then
there exists a unique $l\leq\tilde{l}<l+2^{N+1}$ such that $H_{\tilde{l}}^{m}\left(-2\right)\equiv0\pmod{2^{N+1}}$.
Moreover, $\tilde{l}\equiv l\pmod{2^{N}}$. 
\end{thm}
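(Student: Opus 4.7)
The plan is to prove the statement by induction on $N$, reformulating it as a Hensel-type claim about residue classes. The refined claim is: for every $N\geq 3$ there is a unique residue class $\overline{l}_N\pmod{2^N}$ such that $H_l^m(-2)\equiv 0\pmod{2^N}$ if and only if $l\equiv \overline{l}_N\pmod{2^N}$, and these classes satisfy $\overline{l}_{N+1}\equiv \overline{l}_N\pmod{2^N}$. The base case $N=3$ is exactly Proposition~\ref{prop:m=00003D0(4)->l=00003D2(8)} together with Lemmas~\ref{lem:m=00003D0(4),l=00003D0(4)}--\ref{lem:l=00003D2mod8forM=00003D0mod4}, which single out $\overline{l}_3\equiv 2\pmod 8$. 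Granted the refined statement, the theorem follows at once: in any window $[l,l+2^{N+1})$ with $l\equiv \overline{l}_N\pmod{2^N}$, the only $\tilde{l}$ making $H_{\tilde{l}}^m(-2)\equiv 0\pmod{2^N}$ are the two integers congruent to $\overline{l}_N$ mod $2^N$, and uniqueness at level $N+1$ selects exactly one of them, which is then automatically $\equiv l\pmod{2^N}$.

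The inductive step reduces to a single Hensel-type computation. Assuming $l\equiv \overline{l}_N\pmod{2^N}$, both $H_l^m(-2)$ and $H_{l+2^N}^m(-2)$ are divisible by $2^N$, so it suffices to show
\[
\Delta_N(l):=\frac{H_{l+2^N}^m(-2)-H_l^m(-2)}{2^N}\equiv 1\pmod 2,
\]
for then exactly one of $l, l+2^N$ is killed modulo $2^{N+1}$. I would compute $\Delta_N(l)$ in two stages. First, the factorial-divisibility estimate of Lemma~\ref{lem:power2 bigger than !} applied to the closed form in Lemma~\ref{lemma:forms of Hlm} shows that the $2$-adic valuation of $\sigma_l^m(k)$ grows roughly linearly in $k$, so $\sigma_l^m(k)\equiv 0\pmod{2^{N+1}}$ beyond an explicit threshold $k_0(N)$; this truncates the difference to a finite sum over $k\leq k_0(N)$. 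Second, for each such $k$, I expand the two binomial factors $\binom{(l\pm m)/2}{k}$ about $l$ using the binomial theorem in the parameter $2^N$. The contributions of order $2^{jN}$ with $j\geq 2$ vanish modulo $2^{N+1}$ for all sufficiently large $N$, leaving a single derivative-like term of order $2^N$ for each $k$.

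The main obstacle, and the real arithmetic content of the theorem, is verifying that the resulting finite sum of derivative-like contributions has $2$-adic valuation exactly $0$. Since $H^m(l)$ is not polynomial in $l$, the classical Hensel criterion is not directly available; one has to extract an explicit formal derivative from the products of consecutive integers in Lemma~\ref{lemma:forms of Hlm} and check by hand that, under the standing assumptions $l\equiv 2\pmod 8$ and $m\equiv 0\pmod 4$, the contributions from low-$k$ coefficients combine to an odd sum. I would carry this out in the same spirit as the mod-$8$ computations already performed in Proposition~\ref{lem:positive-first-bits-m=00003D0(4)}, exploiting the oddness of the denominators $\binom{2k}{k}$ and $(2k+1)$ to reduce to a parity check depending on $t=m/2$. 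This parity check is where I expect the proof to be most delicate, and its success is what justifies the Hensel-style terminology used in the informal description of the method.
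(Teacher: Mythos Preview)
Your overall strategy---reformulate as a Hensel-type lifting and show that the ``derivative'' $\Delta_N(l)=\bigl(H_{l+2^N}^m(-2)-H_l^m(-2)\bigr)/2^N$ is odd---is exactly the route the paper takes. The paper packages both the well-definedness of $H^m(l)\pmod{2^{N+1}}$ and the oddness of the derivative into the single formula
\[
H_{r+2^Nq}^m(-2)\equiv H_r^m(-2)+2^Nq\pmod{2^{N+1}}
\]
(Proposition~\ref{prop:epsilon-coefficent-m=00003D0(4)}), from which the theorem is immediate. Your inductive reformulation is equivalent, and your observation that verifying $\Delta_N(l)\equiv 1\pmod 2$ for all $l\equiv\overline{l}_N\pmod{2^N}$ also yields well-definedness at level $N+1$ is correct.

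The gap is in your plan for the parity check itself. You truncate at a threshold $k_0(N)$ coming from Lemma~\ref{lem:power2 bigger than !}, i.e.\ from $v_2(k!)\geq N+1$. That threshold grows with $N$, so your ``single derivative-like term of order $2^N$ for each $k$'' leaves you with a sum over $O(N)$ values of $k$ whose parity must be established uniformly in $N$; the mod-$8$ style computation you allude to handles only a bounded number of terms and does not obviously extend. The paper's key technical idea, which you are missing, is a \emph{constant} truncation: Lemma~\ref{lemma:epsilon-coef-0-for-k>6} shows that already for $k\geq 6$ the difference $\sigma_{l+2^N}^m(k)-\sigma_l^m(k)$ vanishes modulo $2^{N+1}$, regardless of $N$. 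The proof exploits the symmetric-polynomial structure of the falling factorials to squeeze out the extra factors of $2$ in the first-order terms (the case $i+j=1$ there). With that lemma in hand, the parity check reduces to an explicit computation of $\sigma_l^m(k)-\sigma_r^m(k)$ for $k=0,\dots,5$, where one finds that the contributions for $k=2,3$ and for $k=4,5$ cancel in pairs and only $k=1$ survives to give the odd $2^Nq$. Without an analogue of Lemma~\ref{lemma:epsilon-coef-0-for-k>6}, your outline does not reduce the problem to a finite verification.
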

Using Proposition \ref{prop:m=00003D0(4)->l=00003D2(8)}, Theorem
\ref{theo:UniquelyExists-Hlm=00003D0mod2N+1(m=00003D0)} is an immediate
corollary of

\begin{prop}
Let $m\equiv0\pmod{4}$ and $l\equiv2\pmod{8}$. Fix $N\geq3$ and
write $l=r+2^{N}q$ with $0\leq r<2^{N}$. Then $H_{l}^{m}\left(-2\right)\equiv H_{r}^{m}\left(-2\right)+2^{N}q\pmod{2^{N+1}}$.\label{prop:epsilon-coefficent-m=00003D0(4)}
\end{prop}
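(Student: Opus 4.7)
The plan is to expand $H_l^m(-2) - H_r^m(-2) = \sum_{k \ge 0}\bigl(\sigma_l^m(k) - \sigma_r^m(k)\bigr)$ and show that the $k = 1$ term contributes exactly $2^N q$ modulo $2^{N+1}$ while every term with $k \ne 1$ contributes zero. The hypotheses give $s_l = s_r + \Delta s$ with $\Delta s = 2^{N-1} q$, $s_r \equiv 1 \pmod 4$, and $t$ even; since $l+m$ is even, formula (\ref{eq:sigma_l+m_even}) applies throughout.

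For $k = 0$ the difference vanishes. For $k = 1$, direct computation from $\sigma_l^m(1) = -(s^2 - t^2)$ gives
\[
\sigma_l^m(1) - \sigma_r^m(1) = -\Delta s\,(2 s_r + \Delta s) \equiv -2^N s_r q \equiv 2^N q \pmod{2^{N+1}},
\]
using $N \ge 3$ to discard $(\Delta s)^2$ and using that $s_r$ is odd, so the desired main contribution is produced by the $k = 1$ term alone.

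For $k \ge 2$ the cleanest reformulation is
\[
\sigma_l^m(k) = \frac{(-2)^k}{(2k)!}\,F_k(s),\qquad F_k(s) := \prod_{i=0}^{k-1}\bigl((s - i)^2 - t^2\bigr),
\]
which follows from $\binom{2k}{k}(k!)^2 = (2k)!$ and $(s-t-i)(s+t-i) = (s-i)^2 - t^2$. Two regimes arise. When $k - s_2(k) \ge N+1$ (where $s_2$ denotes the binary digit sum), Lemma~\ref{lem:power2 bigger than !} combined with $v_2\bigl(\binom{2k}{k}\bigr) = s_2(k)$ forces $v_2(\sigma_l^m(k)) \ge k - s_2(k) \ge N+1$, so both $\sigma_l^m(k)$ and $\sigma_r^m(k)$ vanish individually modulo $2^{N+1}$. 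For the remaining finitely many small $k$, I expand
\[
F_k(s_l) - F_k(s_r) = \sum_{j \ge 1}\binom{\Delta s}{j}\,\Delta^j F_k(s_r)
\]
and exploit (i) the bound $v_2\bigl(\binom{2^{N-1}q}{j}\bigr) \ge N - 1 - v_2(j)$, obtained by iterating the identity $\binom{n}{j} = \tfrac{n}{j}\binom{n-1}{j-1}$, and (ii) the parity structure of $B_i(s_r) := (s_r - i)^2 - t^2$, which is odd for even $i$ and divisible by $4$ for odd $i$, thanks to $s_r \equiv 1 \pmod 4$ and $t$ even. Combined with the $v_2$-count of $(2k)!$, this yields $v_2(\sigma_l^m(k) - \sigma_r^m(k)) \ge N+1$.

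The main obstacle is this small-$k$ regime: individual orders $j$ in the forward-difference expansion do not separately achieve $v_2 \ge N+1$ after division by $(2k)!/(-2)^k$, so contributions across several $j$ must be combined, and in the case $v_2(t) = 1$ one needs an additional mod-$16$ cancellation inside $B_i(s_r)$ coming from the identity $(s_r - i)^2 \equiv t^2 \pmod{32}$ when $v_2(s_r - i) = 1$. The bookkeeping is tight and parallels, but is finer than, the concrete mod-$8$ computation of Proposition~\ref{lem:positive-first-bits-m=00003D0(4)}; organising it through the reformulation above and Lemma~\ref{lem:power2 bigger than !} is what makes the argument uniform in $k$ and $N$.
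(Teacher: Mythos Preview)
Your termwise claim for $k\ge 2$ is false: the difference $\sigma_l^m(k)-\sigma_r^m(k)$ need \emph{not} have $2$-adic valuation $\ge N+1$. Take $m=0$, $N=3$, $r=2$, $q=1$ (so $l=10$, $s_l=5$, $s_r=1$, $t=0$). Then $F_2(s_l)-F_2(s_r)=25\cdot16-0=400$, and $\sigma_{10}^0(2)-\sigma_2^0(2)=\tfrac{400}{6}=\tfrac{200}{3}$ has $v_2=3=N$, not $N+1$. Likewise $\sigma_{10}^0(3)-\sigma_2^0(3)=-40$ has $v_2=3$. No amount of combining forward-difference orders $j$ for a fixed $k$ can help, since the full difference $F_k(s_l)-F_k(s_r)$ itself already falls short after division by $(2k)!/(-2)^k$. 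The cancellation you need is genuinely \emph{across different values of $k$}: in the example, $\tfrac{200}{3}+(-40)=\tfrac{80}{3}$ does have $v_2=4=N+1$.

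The paper's proof reflects this. A separate lemma (Lemma~\ref{lemma:epsilon-coef-0-for-k>6}) shows that for every $k\ge 6$ the individual difference $\sigma_l^m(k)-\sigma_r^m(k)$ vanishes modulo $2^{N+1}$; the argument there expands in powers of $q$ and bounds each coefficient, and your sketch for the ``large $k$'' regime is in this spirit, though the paper's cutoff $k\ge 6$ is uniform in $N$. For $0\le k\le 5$ the paper computes each $\sigma_l^m(k)$ explicitly modulo $2^{N+1}$ via the recursion $\sigma_l^m(k+1)=-\sigma_l^m(k)\frac{(s-t-k)(s+t-k)}{(k+1)(2k+1)}$, obtaining
\[
\sigma_l^m(k)\equiv\sigma_r^m(k)+c_k\pmod{2^{N+1}},\qquad c_1=2^Nq,\ c_2=c_3=2^{2N-3}q,\ c_4=c_5=2^{2N-3}q+2^{N-1}tq,
\]
and then observes that $c_2+c_3=2^{2N-2}q\equiv0$ and $c_4+c_5=2^{2N-2}q+2^Ntq\equiv0$ (using $N\ge3$ and $t$ even). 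To salvage your approach you would have to abandon the per-$k$ claim and track these pairwise cancellations, which amounts to redoing the explicit computation for $k\le5$.
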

\begin{rem*}
In particular, it follows that if $l\equiv\tilde{l}\pmod{2^{N}}$
then $H_{l}^{m}\left(-2\right)\equiv H_{\tilde{l}}^{m}\left(-2\right)\pmod{2^{N}}$. 
\end{rem*}
To prove Proposition \ref{prop:epsilon-coefficent-m=00003D0(4)} we
first observe
\begin{lem}
Let $m\equiv0\pmod{4}$, $l\equiv0\pmod{2}$ and $k\geq6$. If $\tilde{l}\equiv l\pmod{2^{N}}$
then $\sigma_{\tilde{l}}^{m}\left(k\right)\equiv\sigma_{l}^{m}\left(k\right)\pmod{2^{N+1}}$.\label{lemma:epsilon-coef-0-for-k>6}
\end{lem}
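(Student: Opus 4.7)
Because $l+m$ is even, I would start from the explicit formula in Lemma~\ref{lemma:forms of Hlm},
\[
\sigma_l^m(k) \;=\; \frac{(-2)^k \binom{s-t}{k}\binom{s+t}{k}}{\binom{2k}{k}},\qquad s = l/2,\ t = m/2,
\]
so that $s,t\in\mathbb{Z}$ and the hypothesis $\tilde{l}\equiv l\pmod{2^{N}}$ becomes $\tilde{s}\equiv s\pmod{2^{N-1}}$. By Kummer's theorem, $v_{2}\bigl(\binom{2k}{k}\bigr)=s_{2}(k)$, the binary digit-sum of $k$, so the constant prefactor contributes exactly $k-s_{2}(k)$ to the $2$-adic valuation.

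To control the $l$-dependent part, I would put $h:=\tilde{s}-s$ (so $v_{2}(h)\ge N-1$) and expand by Vandermonde's identity:
\[
\binom{\tilde{s}\pm t}{k}-\binom{s\pm t}{k} \;=\; \sum_{j=1}^{k}\binom{s\pm t}{k-j}\binom{h}{j}.
\]
The elementary identity $\binom{h}{j}=(h/j)\binom{h-1}{j-1}$, together with $\binom{h-1}{j-1}\in\mathbb{Z}_{2}$, gives $v_{2}\bigl(\binom{h}{j}\bigr)\ge v_{2}(h)-v_{2}(j)\ge(N-1)-\lfloor\log_{2}k\rfloor$ for all $1\le j\le k$. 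Since each $\binom{s\pm t}{k-j}$ is a $2$-adic integer, this yields
\[
v_{2}\!\left(\binom{\tilde{s}\pm t}{k}-\binom{s\pm t}{k}\right) \;\ge\; (N-1)-\lfloor\log_{2}k\rfloor,
\]
and the product rule $AB-A'B'=(A-A')B+A'(B-B')$ propagates the same bound to the difference $\binom{\tilde{s}-t}{k}\binom{\tilde{s}+t}{k}-\binom{s-t}{k}\binom{s+t}{k}$.

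Combining,
\[
v_{2}\bigl(\sigma_{\tilde{l}}^{m}(k)-\sigma_{l}^{m}(k)\bigr) \;\ge\; (k-s_{2}(k))+(N-1)-\lfloor\log_{2}k\rfloor,
\]
so the lemma reduces to the purely combinatorial inequality $k-s_{2}(k)-\lfloor\log_{2}k\rfloor\ge 2$ for every $k\ge 6$. This is the crux, and it is essentially tight: at $k=6,7$ the quantity equals exactly $2$, and for larger $k$ it grows because $k-s_{2}(k)$ is linear in $k$ while $\lfloor\log_{2}k\rfloor$ is only logarithmic. The very same estimate applied to $k=4,5$ produces only $v_{2}\ge N$, in line with the fact (checkable numerically) that the conclusion of the lemma would actually fail for those $k$. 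The main obstacle is really just spotting where the logarithmic loss enters the Vandermonde expansion --- namely at $j$ equal to the largest power of $2$ in $\{1,\dots,k\}$ --- and tracking it precisely, since it is this loss that pins down the sharp threshold $k\ge 6$.
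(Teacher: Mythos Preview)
Your argument is correct and takes a genuinely different route from the paper's. The paper first disposes of the case $v_2(k!)\ge N+1$ (where both $\sigma$'s vanish modulo $2^{N+1}$), then writes $s=2^{N-1}q+\tilde s$ and expands the numerator of $\sigma_l^m(k)$ as a double sum over elementary symmetric polynomials $P_{k-i,k}(\tilde s\pm t)$, obtaining a polynomial in $q$; it then shows by a case split on $i+j\in\{1,2,3,\ge 4\}$ that every coefficient of $q^{i+j}$ with $i+j>0$ has $2$-adic valuation at least $N+1$. Your approach replaces this symmetric-polynomial expansion and the attendant case analysis by a single application of Vandermonde's identity to the increment, together with the pointwise bound $v_2\bigl(\binom{h}{j}\bigr)\ge v_2(h)-v_2(j)$ and Kummer's theorem for $v_2\bigl(\binom{2k}{k}\bigr)=s_2(k)$; the whole lemma then collapses to the elementary inequality $k-s_2(k)-\lfloor\log_2 k\rfloor\ge 2$ for $k\ge 6$, which is sharp at $k=6,7$. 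This is cleaner and more conceptual, and it makes transparent why $k\ge 6$ is the correct threshold (and why the paper must treat $k\le 5$ separately in Proposition~\ref{prop:epsilon-coefficent-m=00003D0(4)}). The paper's expansion, on the other hand, yields finer information about the individual coefficients $a_{i,j,k,l,m}$, which is in the spirit of the explicit computations needed for $0\le k\le 5$ in that proposition. One small remark: you never explicitly address what happens when the intermediate bound $(N-1)-\lfloor\log_2 k\rfloor$ is negative, but in fact your final inequality $(k-s_2(k))+(N-1)-\lfloor\log_2 k\rfloor\ge N+1$ remains valid regardless, so no extra case is needed.
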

We postpone the proof of this Lemma to the end of the section.
\begin{proof}[Proof of Proposition \ref{prop:epsilon-coefficent-m=00003D0(4)}]

By Lemma \ref{lemma:epsilon-coef-0-for-k>6}, for all $k\geq6$ we
have that $\sigma_{l}^{m}\left(k\right)\equiv\sigma_{r}^{m}\left(k\right)\pmod{2^{N+1}}$.
For $0\leq k\leq5$ we calculate $\sigma_{l}^{m}\left(k\right)$ explicitly. 

From (\ref{eq:sig(0)=00003D1}) we have $\sigma_{l}^{m}\left(0\right)=\sigma_{r}^{m}\left(0\right)=1$.

Let $\tilde{s}=\frac{r}{2}$. From the assumptions we have $\tilde{s}\equiv1\pmod{4}$.
Using this and the assumptions that $N\geq3$ and $t$ is even we
get the following expressions for the next terms.
\[
\sigma_{l}^{m}\left(1\right)=-\left(2^{N-1}q+\tilde{s}-t\right)\left(2^{N-1}q+\tilde{s}+t\right)\equiv2^{N}q+\sigma_{r}^{m}\left(1\right)\pmod{2^{N+1}}\,.
\]
\[
\sigma_{l}^{m}\left(2\right)=-\sigma_{l}^{m}\left(1\right)\frac{\left(2^{N-1}q+\tilde{s}-t-1\right)\left(2^{N-1}q+\tilde{s}+t-1\right)}{2\cdot3}\,.
\]
Elementary manipulations of this expression, noticing that $\tilde{s}\equiv1\pmod{4}$,
$N\geq3$, $\tilde{s}+t-1$ is even and that $\sigma_{r}^{m}\left(1\right)$
is odd, give
\[
\sigma_{l}^{m}\left(2\right)\equiv2^{2N-3}q+\sigma_{r}^{m}\left(2\right)\pmod{2^{N+1}}\,.
\]
Moving on to the next term, using again that $N\geq3$ and noticing
that $\sigma_{l}^{m}\left(2\right)$ is even we get
\[
\sigma_{l}^{m}\left(3\right)=-\sigma_{l}^{m}\left(2\right)\frac{\left(2^{N-1}q+\tilde{s}-t-2\right)\left(2^{N-1}q+\tilde{s}+t-2\right)}{3\cdot5}
\]
\[
\equiv2^{2N-3}q+\sigma_{r}^{m}\left(3\right)\pmod{2^{N+1}}\,.
\]

At this point we observe that $\sigma_{l}^{m}\left(2\right)+\sigma_{l}^{m}\left(3\right)\equiv\sigma_{r}^{m}\left(2\right)+\sigma_{r}^{m}\left(3\right)\pmod{2^{N+1}}$
(since $N\geq3$). A similar circumstance occurs in the next two terms.
\[
\sigma_{l}^{m}\left(4\right)=-\sigma_{l}^{m}\left(3\right)\frac{\left(2^{N-1}q+\tilde{s}-t-3\right)\left(2^{N-1}q+\tilde{s}+t-3\right)}{4\cdot7}
\]
\[
\equiv2^{2N-5}q\left(4-t^{2}\right)+\sigma_{r}^{m}\left(3\right)\left[2^{2N-4}q+2^{N-1}q\right]+\sigma_{r}^{m}\left(4\right)
\]
 
\[
\equiv2^{2N-3}q+2^{N-1}tq+\sigma_{r}^{m}\left(4\right)\pmod{2^{N+1}}
\]
and
\[
\sigma_{l}^{m}\left(5\right)\equiv-\sigma_{l}^{m}\left(4\right)\frac{\left(2^{N-1}q+\tilde{s}-t-4\right)\left(2^{N-1}q+\tilde{s}+t-4\right)}{5\cdot9}
\]
\[
\equiv2^{2N-3}q+2^{N-1}tq+\sigma_{r}^{m}\left(5\right)\pmod{2^{N+1}}
\]

Summing over $k\geq0$ we get that $H_{l}^{m}\left(-2\right)\equiv2^{N}q+H_{r}^{m}\left(-2\right)\pmod{2^{N+1}}$.
\end{proof}
It remains to prove Lemma \ref{lemma:epsilon-coef-0-for-k>6}.
\begin{proof}[Proof of Lemma \ref{lemma:epsilon-coef-0-for-k>6}]

For any $x\in\mathbb{Q}$ let $v_{2}\left(x\right)$ be the dyadic
valuation of $x$ (see Notation~\ref{notation:The-dyadic-valuation.}).
For $k$ such that $v_{2}\left(k!\right)\geq N+1$ we have that $\sigma_{l}^{m}\left(k\right)\equiv0\pmod{2^{N+1}}$
for all $l$ (see Lemma \ref{lem:power2 bigger than !}). Hence we
may assume that 
\begin{equation}
v_{2}\left(k!\right)\leq N\,.\label{eq:can_assume_v2(k!)<=00003DN}
\end{equation}

In particular, since $k\geq6$ it follows that 
\begin{equation}
N\geq4\,.\label{eq:Nbigger3Ink>6}
\end{equation}

Let $s=\lfloor\frac{l}{2}\rfloor=2^{N-1}q+\tilde{s}$ with $0\leq\tilde{s}<2^{N-1}$.
Collecting terms in (\ref{eq:sigma_l+m_even}) according to the powers
of $q$ we see that 
\[
\sigma_{l}^{m}\left(k\right)=\sum_{i=0}^{k}\sum_{j=0}^{k}\underbrace{\frac{1}{k!\left(2k-1\right)!!}2^{\left(N-1\right)\left(i+j\right)}P_{k-i,k}\left(\tilde{s}-t\right)P_{k-j,k}\left(\tilde{s}+t\right)}_{a_{i,j,k,l,m}}q^{i+j}
\]
where $P_{i,k}\left(x\right)$ is the elementary symmetric polynomial
of degree $i$ in the $k$ variables $\left\{ x,x-1,...,x-k+1\right\} $.

We now show that all the coefficients $a_{i,j,k,l,m}$ with $i+j>0$
vanish modulus~$2^{N+1}$. This is enough since $\tilde{s}$ is determined
by $l$ modulus $2^{N}$.

Case (i): $i+j\geq4$. We use (\ref{eq:can_assume_v2(k!)<=00003DN})
and (\ref{eq:Nbigger3Ink>6}) to get $v_{2}\left(a_{i,j,k,l,m}\right)\geq4\left(N-1\right)-v_{2}\left(k!\right)\geq3N-4>N+1$.

Case (ii): $i+j=1$. We may assume $i=0,j=1$. Here $v_{2}\left(P_{k-i,k}\left(\tilde{s}-t\right)\right)=v_{2}\left(P_{k,k}\left(\tilde{s}-t\right)\right)\geq v_{2}\left(k!\right)$
(see proof of Lemma \ref{lem:power2 bigger than !}). Since $k\geq6$
we also have $v_{2}\left(P_{k-j,k}\left(\tilde{s}+t\right)\right)=v_{2}\left(P_{k-1,k}\left(\tilde{s}+t\right)\right)\geq2$.
So $v_{2}\left(a_{i,j,k,l,m}\right)\geq N-1+v_{2}\left(k!\right)+2-v_{2}\left(k!\right)\geq N+1$. 

Case (iii): $3\geq i+j\geq2$. Here a direct examination of the few
possibilities, taking into account that $k\geq6$, shows that $v_{2}\left(P_{k-i,k}\left(\tilde{s}-t\right)\right)+v_{2}\left(P_{k-j,k}\left(\tilde{s}+t\right)\right)\geq3$.
We use (\ref{eq:can_assume_v2(k!)<=00003DN}) and get $v_{2}\left(a_{i,j,k,l,m}\right)\geq2\left(N-1\right)+3-v_{2}\left(k!\right)\geq2N+1-N=N+1$.
\end{proof}

\section{Proof of Theorem \ref{thm:up_to_one_Plm_in_cases}: The case of $\mbox{\ensuremath{m\equiv2\pmod{4}}}$}

The arguments in this section are similar to the ones in §\ref{sec:Proof-of-m=00003D0}.

\subsection{Recovering the lowest three bits of $l$}
\begin{prop}
If $H_{l}^{m}\left(-2\right)\equiv0\pmod{8}$ and $m\equiv2\pmod{4}$
then $l\equiv5\pmod{8}$. \label{prop:m=00003D2(4)->l=00003D5(8)}
\end{prop}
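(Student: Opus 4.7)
The plan is to mirror the proof of Proposition \ref{prop:m=00003D0(4)->l=00003D2(8)}: rule out each of the seven residue classes of $l\pmod 8$ other than $l\equiv 5$ by computing $H_l^m(-2)\pmod 8$ and checking that the result is nonzero. Writing $s=\lfloor l/2\rfloor$ and $t=m/2$ as in \eqref{eq:s,t=00003Dl,m/2}, the hypothesis $m\equiv 2\pmod 4$ means $t$ is \emph{odd}; the key arithmetic anchor is therefore $t^2\equiv 1\pmod 8$, replacing the $t^2\equiv 0\pmod 4$ that drove the analysis of Section \ref{sec:Proof-of-m=00003D0}.

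For each residue of $l$, I pick the appropriate formula from Lemma \ref{lemma:forms of Hlm} --- \eqref{eq:sigma_l+m_even} when $l$ is even and \eqref{eq:sigma_l+m_odd} when $l$ is odd --- evaluate $\sigma_l^m(k)\pmod{2^N}$ for $k=0,1,2,3$ to the required precision $N\le 3$, and kill the tail $k\ge 4$ via Lemma \ref{lem:power2 bigger than !}. The dyadic valuations of $(s-t)(s+t)=s^2-t^2$ and $(s-1)^2-t^2$ are governed by the parity of $s$ and by $t^2\equiv 1\pmod 8$. For instance, $s$ odd forces $v_2(s^2-t^2)\ge 3$ (both $s\pm t$ are even and their sum $2s$ has $v_2=1$); $s\equiv 2\pmod 4$ gives $s^2-t^2\equiv 3\pmod 8$; and $s\equiv 0\pmod 4$ gives $s^2-t^2\equiv -1\pmod 8$. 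Plugging these into the two-term recursion for $\sigma_l^m(k)$ obtained from \eqref{eq:sigma_l+m_even} or \eqref{eq:sigma_l+m_odd} yields, in each of the seven bad residues, an explicit nonzero value of $H_l^m(-2)\pmod 8$. The surviving case $l\equiv 5\pmod 8$ is then addressed in the companion Proposition \ref{prop:positive-first-bits-m=00003D2(4)}, which shows $H_l^m(-2)\equiv 0\pmod 8$ there and furnishes the base case for the Hensel-type lift in Theorem \ref{theo:UniquelyExists-Hlm=00003D0mod2N+1(m=00003D2)}.

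The main obstacle is book-keeping rather than conceptual difficulty. Since $t^2\equiv 1\pmod 8$ does not vanish, one cannot group several residues under one lemma as Lemma \ref{lem:m=00003D0(4),l=00003D0(4)} does in the even-$t$ case: each of the seven bad residues calls for its own short direct computation. I expect the final write-up to parallel Lemmas \ref{lem:m=00003D0(4),l=00003D0(4)}--\ref{lem:l=00003D2mod8forM=00003D0mod4}, namely a sequence of small sub-lemmas, each treating one or two residues and producing a specific nonzero value of $H_l^m(-2)\pmod 8$.
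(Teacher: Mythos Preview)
Your approach is correct and matches the paper's exactly: rule out the seven bad residues of $l\pmod 8$ by computing $H_l^m(-2)$ to increasing $2$-adic precision, using \eqref{eq:sigma_l+m_even} or \eqref{eq:sigma_l+m_odd} as appropriate and Lemma~\ref{lem:power2 bigger than !} for the tail. Your one misstep is the claim that grouping is impossible when $t$ is odd: in fact the paper groups just as in Section~\ref{sec:Proof-of-m=00003D0}, handling $l\equiv 2,3\pmod 4$ together modulo~$2$ (here $s$ is odd, so $s+t$ is even and the proof of Lemma~\ref{lem:m=00003D0(4),l=00003D0(4)} goes through verbatim), then $l\equiv 0\pmod 4$ modulo~$4$, then $l\equiv 1\pmod 8$ modulo~$8$ --- three sub-lemmas rather than seven. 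Your own observation that $s$ odd forces $v_2(s^2-t^2)\ge 3$ is precisely what makes the first grouping work.
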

The proof is by ruling out the other possibilities one by one.
\begin{lem}
If $m\equiv2\pmod{4}$ and either $l\equiv2\pmod{4}$ or $l\equiv3\pmod{4}$
then $H_{l}^{m}\left(-2\right)\equiv1\pmod{2}$.
\end{lem}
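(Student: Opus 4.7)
The plan is to mirror the proof of Lemma~\ref{lem:m=00003D0(4),l=00003D0(4)}. The key parity observation is that under the hypotheses $m\equiv 2\pmod 4$ and $l\equiv 2$ or $3\pmod 4$, both $s=\lfloor l/2\rfloor$ and $t=\lfloor m/2\rfloor$ are \emph{odd}, so their sum $s+t$ is even. This is the same arithmetic phenomenon that drove the earlier lemma (there $s$ and $t$ were both even); only the parities of the individual quantities change, not the parity of $s+t$.

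Next I would reduce the formulas (\ref{eq:sigma_l+m_even}) and (\ref{eq:sigma_l+m_odd}) modulo $2$ as in \S\ref{sec:Proof-of-m=00003D0}. Writing $\binom{2k}{k}k! = 2^k(2k-1)!!$, one cancels the $(-2)^k$ in the numerator, leaving only odd denominators $(2k-1)!!$ and, in the odd-$(l+m)$ case, an extra odd factor $(2k+1)$. Since $m$ is even, the floor functions behave cleanly and in both subcases ($l$ even and $l$ odd) one obtains
\[
\sigma_l^m(k) \equiv \binom{s-t}{k}\,(s+t)(s+t-1)\cdots(s+t-k+1) \pmod{2}.
\]

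For every $k\geq 1$ the product on the right contains the even factor $(s+t)$, so $\sigma_l^m(k)\equiv 0\pmod 2$. Combined with $\sigma_l^m(0)=1$ from (\ref{eq:sig(0)=00003D1}), summing Notation~\ref{not:sigma-l-m} yields $H_l^m(-2)\equiv 1\pmod 2$, as desired. There is no real obstacle here: the only thing to verify carefully is the book-keeping of the floors $\lfloor(l\pm m)/2\rfloor$ when $l+m$ is odd (the $l\equiv 3\pmod 4$ subcase), but since $m$ is even these floors are $s\pm t$ exactly and the reduction goes through unchanged.
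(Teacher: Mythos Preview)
Your proof is correct and is essentially the same as the paper's: both observe that $s$ and $t$ are odd (hence $s+t$ even), reduce formulas (\ref{eq:sigma_l+m_even}) and (\ref{eq:sigma_l+m_odd}) modulo $2$ to obtain $\sigma_l^m(k)\equiv\binom{s-t}{k}(s+t)(s+t-1)\cdots(s+t-k+1)\pmod 2$, and conclude from the even factor $(s+t)$. Your write-up simply spells out the cancellation $\binom{2k}{k}k!=2^k(2k-1)!!$ and the floor bookkeeping a bit more explicitly than the paper does.
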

\begin{proof}
In these cases both $s$ and $t$ are odd. Repeating the proof of
Lemma~\ref{lem:m=00003D0(4),l=00003D0(4)}, rewriting formulas (\ref{eq:sigma_l+m_even})
and (\ref{eq:sigma_l+m_odd}) modulus~2, we get
\[
\sigma_{l}^{m}\left(k\right)\equiv{s-t \choose k}\left(s+t\right)\left(s+t-1\right)...\left(s+t-k+1\right)\pmod{2}\,.
\]
For every $k\geq1$ we have that $\sigma_{l}^{m}\left(k\right)$ is
even, since it has the even factor~$\left(s+t\right)$. So $H_{l}^{m}\left(-2\right)\equiv1\pmod{2}$. 
\end{proof}

\begin{lem}
If $m\equiv2\pmod{4}$ and $l\equiv0\pmod{4}$ then $H_{l}^{m}\left(-2\right)\equiv2\pmod{4}$.
\end{lem}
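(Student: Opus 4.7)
The plan is to mirror the structure of Lemmas \ref{lem:m=00003D0(4),l=00003D0(4)}--\ref{lem:l=00003D2mod8forM=00003D0mod4}: because $l+m$ is even here, apply formula (\ref{eq:sigma_l+m_even}), compute each $\sigma_l^m(k) \pmod{4}$ explicitly for small $k$, invoke Lemma \ref{lem:power2 bigger than !} to discard the tail, and sum. Writing $l = 4q$ and $m = 4j+2$, the hypothesis forces $s = 2q$ to be even and $t = 2j+1$ to be odd, so $s^2 \equiv 0 \pmod 4$ and $t^2 \equiv 1 \pmod 4$.

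From (\ref{eq:sig(0)=00003D1}) we have $\sigma_l^m(0) = 1$, and
$\sigma_l^m(1) = -(s-t)(s+t) = t^2 - s^2 \equiv 1 \pmod 4$, which already contributes $1 + 1 = 2$ to the sum. For $k \geq 4$, Lemma \ref{lem:power2 bigger than !} gives $\sigma_l^m(k) \equiv 0 \pmod 4$ for free, so only $k = 2$ and $k = 3$ remain.

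The crux is to show $\sigma_l^m(2) \equiv 0 \pmod 4$. Using the recursion $\sigma_l^m(2) = -\sigma_l^m(1) \cdot (s-t-1)(s+t-1)/(2 \cdot 3)$, both factors are even: $s-t-1 = 2(q-j-1)$ and $s+t-1 = 2(q+j)$. The key parity observation is that $(q-j-1) + (q+j) = 2q - 1$ is odd, so exactly one of these two integers is even; hence $(s-t-1)(s+t-1)$ has 2-adic valuation at least $3$, and dividing by $6$ (valuation $1$) leaves valuation at least $2$. Then $\sigma_l^m(3) = -\sigma_l^m(2) \cdot (s-t-2)(s+t-2)/(3 \cdot 5)$, where both $s-t-2$ and $s+t-2$ are odd; so $\sigma_l^m(3) \equiv 0 \pmod 4$ automatically.

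Summing gives $H_l^m(-2) \equiv 1 + 1 + 0 + 0 \equiv 2 \pmod 4$, as claimed. There is no substantive obstacle; the only step demanding care is the extra factor of $2$ in $\sigma_l^m(2)$, which requires the sum $(q-j-1) + (q+j)$ parity argument to rule out $\sigma_l^m(2) \equiv 2 \pmod 4$.
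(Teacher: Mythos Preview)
Your proof is correct and follows essentially the same template as the paper's: compute $\sigma_l^m(0)=1$ and $\sigma_l^m(1)\equiv 1\pmod 4$, dispose of $k\ge 4$ via Lemma~\ref{lem:power2 bigger than !}, and show the $k=2,3$ terms contribute nothing modulo~$4$. The only difference is in that last step: the paper observes $\sigma_l^m(3)\equiv -\sigma_l^m(2)\pmod 4$ (since $(s-t-2)(s+t-2)/15\equiv 1\pmod 4$) so the two terms cancel regardless of the value of $\sigma_l^m(2)$, whereas you prove the slightly stronger fact that $\sigma_l^m(2)\equiv 0\pmod 4$ individually via the parity of $(q-j-1)+(q+j)$---in effect an analogue of Lemma~\ref{lem:v2(sig(2))>2} for $l\equiv 0\pmod 4$.
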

\begin{proof}
Here $s$ is even and $t$ is odd. We calculate $H_{2s}^{2t}\left(-2\right)\pmod{4}$
using formula (\ref{eq:sigma_l+m_even}).
\[
\sigma_{2s}^{2t}\left(1\right)=-\left(s-t\right)\left(s+t\right)=t^{2}-s^{2}\equiv1\pmod{4}
\]
and
\[
\sigma_{2s}^{2t}\left(3\right)=-\sigma_{2s}^{2t}\left(2\right)\frac{\left(s-t-2\right)\left(s+t-2\right)}{3\cdot5}\equiv-\sigma_{2s}^{2t}\left(2\right)\pmod{4}\:.
\]
From basic divisibility properties (Lemma \ref{lem:power2 bigger than !})
we also have
\[
\forall k\geq4,\:\sigma_{l}^{m}\left(k\right)\equiv0\pmod{4}\:.
\]
Summing over $k\geq0$ we get $H_{2s}^{2t}\left(-2\right)\equiv2\pmod{4}$.
\end{proof}
\begin{lem}
If $m\equiv2\pmod{4}$ and $l\equiv1\pmod{8}$ then $H_{l}^{m}\left(-2\right)\equiv4\pmod{8}$.
\end{lem}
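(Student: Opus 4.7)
The plan is to follow the template established in Lemma~\ref{lem:l=00003D2mod8forM=00003D0mod4} and in the preceding two lemmas of this section: reduce each $\sigma_l^m(k)$ modulo $8$ for small $k$, invoke Lemma~\ref{lem:power2 bigger than !} to control the tail $k\ge 4$, and sum. Since $l$ is odd and $m=2t$ is even, $l+m$ is odd, so I would work with formula~(\ref{eq:sigma_l+m_odd}) and its recursion
\[
\sigma_l^m(k+1)=-\sigma_l^m(k)\cdot\frac{(s-t-k)(s+t-k)}{(k+1)(2k+3)}\,.
\]
Writing $l=8q+1$ gives $s=4q$ even, and $m\equiv 2\pmod 4$ forces $t$ odd, so $s^2\equiv 0$ and $t^2\equiv 1\pmod 8$.

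The arithmetic I would check is: (i) $\sigma_l^m(0)=1$; (ii) $\sigma_l^m(1)=-(s^2-t^2)/3\equiv 1/3\equiv 3\pmod 8$; (iii) since $s-1$ and $t$ are both odd, $(s-t-1)(s+t-1)=(s-1)^2-t^2$ is divisible by $8$, so after division by $2\cdot 5$ we get $v_2(\sigma_l^m(2))\ge 2$, i.e.\ $\sigma_l^m(2)\equiv 0\pmod 4$; (iv) $(s-t-2)(s+t-2)=(s-2)^2-t^2\equiv 4-1\equiv 3\pmod 8$, which together with the recursion gives $\sigma_l^m(3)\equiv -\sigma_l^m(2)\cdot 3/(3\cdot 7)\equiv \sigma_l^m(2)\pmod 8$. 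Hence $\sigma_l^m(2)+\sigma_l^m(3)\equiv 2\sigma_l^m(2)\equiv 0\pmod 8$, and Lemma~\ref{lem:power2 bigger than !} kills $\sigma_l^m(k)$ for $k\ge 4$. Summing, $H_l^m(-2)\equiv 1+3+0\equiv 4\pmod 8$, as desired.

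Since the structure is a direct analog of the case $m\equiv 0\pmod 4$, I anticipate no real obstacle. The only minor bookkeeping is that $\sigma_l^m(k)$ is generally not an ordinary integer but a dyadic integer, so the odd denominators $3,5,7$ must be inverted in $\mathbb{Z}/8\mathbb{Z}$ (each happens to be its own inverse there); with this in mind the computation goes through cleanly.
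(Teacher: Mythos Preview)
Your proof is correct and follows essentially the same approach as the paper: compute $\sigma_l^m(k)$ modulo~$8$ for $k\le 3$, invoke Lemma~\ref{lem:power2 bigger than !} for $k\ge 4$, and sum. The only cosmetic differences are that you verify $\sigma_l^m(2)\equiv 0\pmod 4$ directly (the paper cites Lemma~\ref{lem:v2(sig(2))>2}), and you write $\sigma_l^m(3)\equiv +\sigma_l^m(2)$ whereas the paper writes $\sigma_l^m(3)\equiv -\sigma_l^m(2)$; since $\sigma_l^m(2)\equiv 0\pmod 4$ these are the same statement modulo~$8$.
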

\begin{proof}
We denote $l=8q+1$ and look at the terms of $H_{l}^{m}\left(-2\right)\pmod{8}$
individually using formula (\ref{eq:sigma_l+m_odd}). Using the assumption
that $t$ is odd we get
\[
\sigma_{l}^{m}\left(1\right)=-\frac{\left(4q-t\right)\left(4q+t\right)}{3}\equiv3\pmod{8}\:,
\]
and by Lemma \ref{lem:v2(sig(2))>2} 
\[
\sigma_{l}^{m}\left(3\right)=-\sigma_{l}^{m}\left(2\right)\frac{\left(4q-2\right)^{2}-t^{2}}{3\cdot7}\equiv-\sigma_{l}^{m}\left(2\right)\pmod{8}\,.
\]

From basic divisibility properties (Lemma \ref{lem:power2 bigger than !})
we have $\sigma_{l}^{m}\left(k\right)\equiv0\pmod{8}$ for all $k\geq4$.
Summing over $k\geq0$ we get $H_{l}^{m}\left(-2\right)\equiv4\pmod{8}$.
\end{proof}
To be complete we verify the remaining case for $l\equiv5\pmod{8}$.
\begin{prop}
If $m\equiv2\pmod{4}$ and $l\equiv5\pmod{8}$ then $H_{l}^{m}\left(-2\right)\equiv0\pmod{8}$.\label{prop:positive-first-bits-m=00003D2(4)}
\end{prop}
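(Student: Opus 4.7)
The plan is to follow the strategy of Proposition~\ref{lem:positive-first-bits-m=00003D0(4)}. Write $l = 8q+5$; since $l+m$ is odd, formula~(\ref{eq:sigma_l+m_odd}) applies. Set $s = \lfloor l/2 \rfloor = 4q+2$ and $t = \lfloor m/2 \rfloor$, which is odd under the hypothesis $m \equiv 2 \pmod 4$. I will compute $\sigma_l^m(k) \pmod 8$ individually for $k = 0,1,2,3$ using the recursion
$$\frac{\sigma_l^m(k)}{\sigma_l^m(k-1)} \;=\; -\frac{(s-t-k+1)(s+t-k+1)}{k(2k+1)},$$
derived from~(\ref{eq:sigma_l+m_odd}), and then invoke Lemma~\ref{lem:power2 bigger than !} to conclude that $\sigma_l^m(k) \equiv 0 \pmod 8$ for every $k \ge 4$ (since $v_2(k!) \ge 3$).

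The key modular inputs are $s^2 \equiv 4 \pmod 8$ and $t^2 \equiv 1 \pmod 8$, which instantly give $(s-t)(s+t) \equiv 3 \pmod 8$ and $(s-2)^2 - t^2 \equiv -1 \pmod 8$, while expanding $(s-1)^2 - t^2 = 8q(2q+1) - 4t'(t'+1)$ (writing $m = 4t'+2$) shows that $(s-t-1)(s+t-1)$ is divisible by $8$, since $t'(t'+1)$ is even. Using $3^{-1} \equiv 3$ and $21^{-1} \equiv 5 \pmod 8$, the recursion produces, in succession, $\sigma_l^m(1) \equiv 7 \pmod 8$, then $\sigma_l^m(2) \equiv 4\bigl[q(2q+1) - t'(t'+1)/2\bigr] \pmod 8$ (in particular $\sigma_l^m(2) \equiv 0 \pmod 4$, consistent with Lemma~\ref{lem:v2(sig(2))>2}), and finally $\sigma_l^m(3) \equiv 5\,\sigma_l^m(2) \pmod 8$.

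Summing yields $H_l^m(-2) \equiv 1 + 7 + 6\,\sigma_l^m(2) \pmod 8$, and because $6 \equiv -2 \pmod 8$ together with $\sigma_l^m(2) \equiv 0 \pmod 4$ forces the last term to vanish mod~$8$, one obtains $H_l^m(-2) \equiv 0 \pmod 8$. The main point requiring care is the $2$-adic bookkeeping behind the assertion that $(s-1)^2-t^2$ is divisible by exactly~$8$ in general, so that $\sigma_l^m(2)$ is a genuinely nonzero multiple of $4$ modulo~$8$; neither $\sigma_l^m(2)$ nor $\sigma_l^m(3)$ vanishes individually, and the cancellation modulo~$8$ is engineered by the paired contribution $\sigma_l^m(2) + \sigma_l^m(3) \equiv 6\,\sigma_l^m(2)$, echoing the paired-term cancellations used in the proof of Proposition~\ref{lem:positive-first-bits-m=00003D0(4)}.
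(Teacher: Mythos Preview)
Your argument is correct and follows essentially the same route as the paper: compute $\sigma_l^m(k)\pmod 8$ for $k\le 3$ via the recursion from~(\ref{eq:sigma_l+m_odd}), invoke Lemma~\ref{lem:power2 bigger than !} for $k\ge 4$, and use $\sigma_l^m(2)\equiv 0\pmod 4$ (Lemma~\ref{lem:v2(sig(2))>2}) so that the $k=2,3$ contributions cancel. Your closing worry is misplaced, though: $(s-1)^2-t^2$ need \emph{not} be divisible by exactly~$8$ (e.g.\ $l=5$, $m=2$ gives $0$), but this is irrelevant---all you need is $\sigma_l^m(2)\equiv 0\pmod 4$, which already forces $6\,\sigma_l^m(2)\equiv 0\pmod 8$ regardless of whether $\sigma_l^m(2)$ is $0$ or $4$ modulo~$8$.
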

\begin{proof}
We let $l=8q+5$. By formula (\ref{eq:sigma_l+m_odd})
\[
\sigma_{l}^{m}\left(1\right)=-\frac{\left(4q+2-t\right)\left(4q+2+t\right)}{3}\equiv-1\pmod{8}\:,
\]
and by Lemma \ref{lem:v2(sig(2))>2}
\[
\sigma_{l}^{m}\left(3\right)=-\sigma_{l}^{m}\left(2\right)\frac{\left(4q\right)^{2}-t^{2}}{3\cdot7}\equiv-\sigma_{l}^{m}\left(2\right)\pmod{8}\,.
\]

From basic divisibility properties (Lemma \ref{lem:power2 bigger than !})
we have $\sigma_{l}^{m}\left(k\right)\equiv0\pmod{8}$ for all $k\geq4$.
Summing over $k\geq0$ we get $H_{l}^{m}\left(-2\right)\equiv0\pmod{8}$.
\end{proof}

\subsection{Recovering the high bits of $l$}
\begin{thm}
\label{theo:UniquelyExists-Hlm=00003D0mod2N+1(m=00003D2)}Let $m\equiv2\pmod{4}$
and suppose that $H_{l}^{m}\left(-2\right)\equiv0\pmod{2^{N}}$. Then
there exists a unique $l\leq\tilde{l}<l+2^{N+1}$ such that $H_{\tilde{l}}^{m}\left(-2\right)\equiv0\pmod{2^{N+1}}$.
Moreover $\tilde{l}\equiv l\pmod{2^{N}}$. 
\end{thm}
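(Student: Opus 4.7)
The plan is to mimic the proof of Theorem \ref{theo:UniquelyExists-Hlm=00003D0mod2N+1(m=00003D0)} almost verbatim. Proposition \ref{prop:m=00003D2(4)->l=00003D5(8)} already supplies the base case modulo $2^{3}$, so the theorem will follow from the analog of Proposition \ref{prop:epsilon-coefficent-m=00003D0(4)}: for $m\equiv 2\pmod 4$, $l\equiv 5\pmod 8$, $N\geq 3$, and $l=r+2^{N}q$ with $0\leq r<2^{N}$,
\[
H_{l}^{m}(-2)\;\equiv\;H_{r}^{m}(-2)+2^{N}q\cdot u\pmod{2^{N+1}}
\]
for some $2$-adic unit $u$. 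Given such a congruence, since $H_{r}^{m}(-2)\equiv H_{l}^{m}(-2)\equiv 0\pmod{2^{N}}$, exactly one choice of $\tilde q\in\{0,1\}$ in $\tilde l=r+2^{N}\tilde q$ annihilates the residue modulo $2^{N+1}$, giving the unique lift in $[l,l+2^{N+1})$ with $\tilde l\equiv l\pmod{2^{N}}$.

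To establish the congruence I would split $H_{l}^{m}(-2)=\sum_{k}\sigma_{l}^{m}(k)$ at $k_{0}=6$. For $k\geq 6$ I would prove the analog of Lemma \ref{lemma:epsilon-coef-0-for-k>6}: when $l\equiv\tilde l\pmod{2^{N}}$ (both odd, so that formula (\ref{eq:sigma_l+m_odd}) governs $\sigma_{l}^{m}(k)$), $\sigma_{l}^{m}(k)\equiv\sigma_{\tilde l}^{m}(k)\pmod{2^{N+1}}$. Writing $s=\tilde s+2^{N-1}q$ and expanding $\sigma_{l}^{m}(k)$ in powers of $q$ via elementary symmetric polynomials in $\{\tilde s-t,\ldots,\tilde s-t-k+1\}$ and $\{\tilde s+t,\ldots,\tilde s+t-k+1\}$, the three-case valuation analysis of Lemma \ref{lemma:epsilon-coef-0-for-k>6} (on $i+j\geq 4$, $i+j=1$, and $i+j\in\{2,3\}$) carries over unchanged, since the extra denominator $(2k+1)$ appearing in (\ref{eq:sigma_l+m_odd}) is a $2$-adic unit and does not affect any $v_{2}$ bound. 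For the low-index terms $k=0,1,\ldots,5$ I would compute each $\sigma_{l}^{m}(k)\pmod{2^{N+1}}$ explicitly using $\tilde s\equiv 2\pmod 4$ and $t$ odd, and then verify that the sum of the $q$-dependent parts equals $2^{N}q\cdot u$.

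The main obstacle is this last step. In contrast to the $m\equiv 0\pmod 4$ setting, here $\tilde s$ is even, so $\sigma_{l}^{m}(1)=-(s^{2}-t^{2})/3$ contributes nothing linear in $q$ modulo $2^{N+1}$; instead each of $\sigma_{l}^{m}(2),\sigma_{l}^{m}(3),\sigma_{l}^{m}(4),\sigma_{l}^{m}(5)$ produces a $q$-contribution of exact $2$-adic valuation $N-1$ (the factor $(s-j)^{2}-t^{2}$ with $\tilde s-j$ odd pushes a $2^{N}q$ into the numerator, which the denominators $k(2k+1)$ can then cut down by one factor of $2$). One must therefore check that these four $2^{N-1}$-level contributions combine to a coefficient of exact $2$-adic valuation $1$, so that after absorbing the spare factor of $2$ the net sum is an odd multiple of $2^{N}$ times $q$. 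This is the analog of the pair cancellations $\sigma_{l}^{m}(2)+\sigma_{l}^{m}(3)$ and $\sigma_{l}^{m}(4)+\sigma_{l}^{m}(5)$ used in the proof of Proposition \ref{prop:epsilon-coefficent-m=00003D0(4)}, but shifted by one in valuation; the differing parity of $\tilde s$ and the extra $(2k+1)$ factors from (\ref{eq:sigma_l+m_odd}) make it necessary to recheck the cancellation by direct computation. Once this combinatorial identity is verified, the Hensel-type lifting step is formal.
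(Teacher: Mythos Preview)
Your plan matches the paper's proof exactly: Proposition~\ref{prop:m=00003D2(4)->l=00003D5(8)} plus the analogue of Proposition~\ref{prop:epsilon-coefficent-m=00003D0(4)} (this is Proposition~\ref{prop:epsilon-coefficent-m=00003D2(4)-1} in the paper), with the tail $k\geq 6$ handled by observing that the extra factor $2k+1$ from~(\ref{eq:sigma_l+m_odd}) is a $2$-adic unit and the terms $k\leq 5$ computed directly. One small correction to your heuristics: when you carry out the computation you will find that the $q$-contributions from $k=4$ and $k=5$ have valuation $2N-3\geq N$ (not $N-1$), so the whole $2^{N}q$ already comes from the pair $\sigma_{l}^{m}(2)+\sigma_{l}^{m}(3)$ while $\sigma_{l}^{m}(4)+\sigma_{l}^{m}(5)$ vanishes modulo $2^{N+1}$, and the unit $u$ is simply~$1$.
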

Using Proposition \ref{prop:m=00003D2(4)->l=00003D5(8)}, Theorem
\ref{theo:UniquelyExists-Hlm=00003D0mod2N+1(m=00003D2)} is an immediate
corollary of
\begin{prop}
Let $m\equiv2\pmod{4}$ and $l\equiv5\pmod{8}$. Fix $N\geq3$ and
write $\mbox{\ensuremath{l=r+2^{N}q}}$ with $0\leq r<2^{N}$. Then
$H_{l}^{m}\left(-2\right)\equiv H_{r}^{m}\left(-2\right)+2^{N}q\pmod{2^{N+1}}$.\label{prop:epsilon-coefficent-m=00003D2(4)-1}
\end{prop}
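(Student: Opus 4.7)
The plan is to mirror the proof of Proposition~\ref{prop:epsilon-coefficent-m=00003D0(4)} with the parity roles of $s$ and $t$ swapped. Writing $l = 2s+1$ and $m = 2t$, the hypotheses $l \equiv 5 \pmod 8$ and $m \equiv 2 \pmod 4$ force $s = \tilde s + 2^{N-1}q$ with $\tilde s \equiv 2 \pmod 4$ (where $r = 2\tilde s + 1$), and $t$ odd. Since $l+m$ is odd, the relevant formula is~\eqref{eq:sigma_l+m_odd}; it differs from~\eqref{eq:sigma_l+m_even} only by the odd factor $2k+1$ in the denominator, which is a unit modulo $2^{N+1}$ and plays no role in any 2-adic estimate.

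The first step is to establish an analog of Lemma~\ref{lemma:epsilon-coef-0-for-k>6}: for $k \geq 6$, odd $l$, and $m \equiv 2 \pmod 4$, the residue of $\sigma_l^m(k)$ modulo $2^{N+1}$ depends only on $l \bmod 2^N$. The proof should go through verbatim: expand $\binom{s-t}{k}\binom{s+t}{k}$ in powers of $q$ using the elementary symmetric polynomials $P_{i,k}$, split into the three cases $i+j \geq 4$, $i+j = 1$, and $i+j \in \{2,3\}$, and apply Lemma~\ref{lem:power2 bigger than !} together with (\ref{eq:can_assume_v2(k!)<=00003DN})-type reductions.

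The second step is the explicit computation of $\sigma_l^m(k)$ for $0 \leq k \leq 5$ modulo $2^{N+1}$ via the recursion
\[
\sigma_l^m(k+1) = -\sigma_l^m(k)\,\frac{(s-t-k)(s+t-k)}{(k+1)(2k+3)},
\]
substituting $s = \tilde s + 2^{N-1}q$ and reducing. A quick check shows $\sigma_l^m(1) \equiv \sigma_r^m(1) \pmod{2^{N+1}}$ outright, because the cross term $2^N q\,\tilde s$ vanishes modulo $2^{N+1}$ (since $\tilde s$ is even here, unlike the $m \equiv 0 \pmod 4$ setting). Hence the $2^N q$ contribution must be assembled from $k \geq 2$: the shifts $\tilde s - 1, \tilde s - 2, \tilde s - 3, \tilde s - 4$ alternate parity, and the cross terms of the form $2^N q(\tilde s - j)$ contribute nontrivially modulo $2^{N+1}$ exactly when $\tilde s - j$ is odd. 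Using Lemma~\ref{lem:v2(sig(2))>2} to control $\sigma_l^m(2)$ and the odd inverses of $3, 5, 7, 9, 11$ modulo~$2^{N+1}$, one expects the pair $(k,k+1) = (2,3)$ (and possibly $(4,5)$) to each produce a $2^{N-1}q$ residual which, after the odd units are reduced modulo~$4$, combines into a single surviving $2^N q \pmod{2^{N+1}}$.

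The main obstacle is the more delicate bookkeeping than in the $m \equiv 0 \pmod 4$ case: because $k=1$ no longer absorbs the $2^Nq$ contribution, one must propagate $2^{N-1}q$ residuals through the recursion and verify that odd-unit coefficients (the inverses $3^{-1}, 5^{-1}, 7^{-1}, 9^{-1}, 11^{-1} \pmod 8$) align so that after pairwise combination only a clean $+2^N q$ survives. Once the five explicit residues are written out and summed with the tail congruence from the analog of Lemma~\ref{lemma:epsilon-coef-0-for-k>6}, the result $H_l^m(-2) \equiv H_r^m(-2) + 2^N q \pmod{2^{N+1}}$ will follow.
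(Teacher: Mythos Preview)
Your proposal is correct and follows essentially the same route as the paper's proof: invoke the analog of Lemma~\ref{lemma:epsilon-coef-0-for-k>6} for $k\geq 6$ (which the paper records as Lemma~\ref{prop:epsilon-coef-0-for-k>6-1}, noting only the harmless extra odd factor $2k+1$), then compute $\sigma_l^m(k)$ for $0\leq k\leq 5$ via the recursion, using $\tilde s\equiv 2\pmod 4$, $t$ odd, and Lemma~\ref{lem:v2(sig(2))>2}. Your anticipation is exactly right: $k=1$ contributes nothing new since $\tilde s$ is even, the pair $(k,k+1)=(2,3)$ each produces a $2^{2N-3}q+2^{N-1}q$ residual summing to $2^Nq$, and the pair $(4,5)$ each produces only $2^{2N-3}q$, summing to $2^{2N-2}q\equiv 0\pmod{2^{N+1}}$.
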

To prove Proposition \ref{prop:epsilon-coefficent-m=00003D2(4)-1}
we first observe
\begin{lem}
Let $m\equiv2\pmod{4}$, $l\equiv1\pmod{2}$ and $k\geq6$. If $\tilde{l}\equiv l\pmod{2^{N}}$
then $\sigma_{\tilde{l}}^{m}\left(k\right)\equiv\sigma_{l}^{m}\left(k\right)\pmod{2^{N+1}}$.\label{prop:epsilon-coef-0-for-k>6-1}
\end{lem}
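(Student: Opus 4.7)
The plan is to mirror the proof of Lemma \ref{lemma:epsilon-coef-0-for-k>6} essentially verbatim, substituting formula (\ref{eq:sigma_l+m_odd}) in place of (\ref{eq:sigma_l+m_even}) (since now $l+m$ is odd) and adjusting the parameter change of variables for $l$ odd rather than even. The crucial observation is that no valuation estimate in the original argument used the parity of $t$ or the parity of $s$, so the replacement should be almost cost-free.

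First I would reduce to $v_{2}(k!) \leq N$, since otherwise Lemma \ref{lem:power2 bigger than !} yields $\sigma_l^m(k) \equiv 0 \pmod{2^{N+1}}$ for every $l$. For $k \geq 6$ this forces $N \geq 4$. Write the residue $l \equiv r \pmod{2^N}$ with $r$ odd, so that $s := \lfloor l/2 \rfloor = 2^{N-1}q + \tilde{s}$ where $\tilde{s} := (r-1)/2 \in \{0,1,\ldots,2^{N-1}-1\}$. Substituting into (\ref{eq:sigma_l+m_odd}) and collecting terms by powers of $q$ gives
\[
\sigma_l^m(k) = \sum_{i,j=0}^{k} \frac{(-1)^k\, 2^{(N-1)(i+j)}\, P_{k-i,k}(\tilde{s}-t)\, P_{k-j,k}(\tilde{s}+t)}{k!\,(2k-1)!!\,(2k+1)}\, q^{i+j},
\]
where $P_{\alpha,k}(x)$ denotes the elementary symmetric polynomial of degree $\alpha$ in the $k$ consecutive integers $x, x-1, \ldots, x-k+1$. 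The extra factor $(2k+1)$ in the denominator is odd, so the $2$-adic valuation of every coefficient is exactly what it was in the proof of Lemma \ref{lemma:epsilon-coef-0-for-k>6}. As $\tilde{s}$ is determined by the residue of $l$ modulo $2^N$, it then suffices to show that the coefficient of $q^{i+j}$ vanishes modulo $2^{N+1}$ whenever $i+j > 0$.

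This splits into the same three cases as before. When $i+j \geq 4$, the factor $2^{4(N-1)}$ dominates and yields $v_2 \geq 3N-4 > N+1$ using $N \geq 4$. When $i+j = 1$, the estimates $v_2(P_{k,k}(\cdot)) \geq v_2(k!)$ (as in the proof of Lemma \ref{lem:power2 bigger than !}) together with $v_2(P_{k-1,k}(\cdot)) \geq 2$ (which holds for any integer argument once $k \geq 6$, since omitting a single one of at least six consecutive integers still leaves two or more even factors) give the required bound. Finally, when $i+j \in \{2,3\}$, a direct case inspection identical to the one in Lemma \ref{lemma:epsilon-coef-0-for-k>6} yields $v_2(P_{k-i,k}(\tilde{s}-t)) + v_2(P_{k-j,k}(\tilde{s}+t)) \geq 3$, which combined with (\ref{eq:can_assume_v2(k!)<=00003DN}) from the earlier proof produces $v_2 \geq N+1$.

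The anticipated main obstacle is purely bookkeeping: checking that the rewriting of the sum by powers of $q$ still makes sense when the formula being expanded is (\ref{eq:sigma_l+m_odd}) rather than (\ref{eq:sigma_l+m_even}), and that the denominator $(2k-1)!!(2k+1)$ is handled correctly. Since $(2k+1)$ is a unit $2$-adically and the combinatorial identity that turns $P_{k,k}(s\pm t)$ into a $q$-expansion with coefficients $P_{k-i,k}(\tilde{s}\pm t)$ is the same, no genuinely new input is required; only the parity of $\tilde{s}-t$ and $\tilde{s}+t$ changes, and the valuation bounds are insensitive to this.
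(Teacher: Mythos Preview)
Your proposal is correct and follows exactly the same approach as the paper, which simply states that ``the only difference from the proof of Lemma \ref{lemma:epsilon-coef-0-for-k>6} is a division by an odd number which does not influence the calculations.'' You have spelled out in detail precisely this observation---that the extra factor $(2k+1)$ in the denominator of (\ref{eq:sigma_l+m_odd}) is a $2$-adic unit and that the valuation estimates in the case analysis depend only on $k\geq 6$, not on the parities of $\tilde{s}\pm t$---so your write-up is a faithful expansion of the paper's one-line proof.
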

\begin{proof}
The only difference from the proof of Lemma \ref{lemma:epsilon-coef-0-for-k>6}
is a division by an odd number which does not influence the calculations.
\end{proof}
Now we move on to the proof of the main proposition of this section.
\begin{proof}[Proof of Proposition \ref{prop:epsilon-coefficent-m=00003D2(4)-1}]

By Lemma \ref{prop:epsilon-coef-0-for-k>6-1}, for all $k\geq6$ we
have that $\sigma_{l}^{m}\left(k\right)\equiv\sigma_{r}^{m}\left(k\right)\pmod{2^{N+1}}$.
For $0\leq k\leq5$ we calculate $\sigma_{l}^{m}\left(k\right)$ explicitly. 

From (\ref{eq:sig(0)=00003D1}) we have $\sigma_{l}^{m}\left(0\right)=\sigma_{r}^{m}\left(0\right)=1$.

Denote $\tilde{s}=\left\lfloor \frac{r}{2}\right\rfloor $, from the
assumptions we get that $\tilde{s}\equiv2\pmod{4}$. Using this and
the assumption $N\geq3$ we get
\[
\sigma_{l}^{m}\left(1\right)=-\frac{1}{3}\left(2^{N-1}q+\tilde{s}-t\right)\left(2^{N-1}q+\tilde{s}+t\right)\equiv\sigma_{r}^{m}\left(1\right)\pmod{2^{N+1}}\,.
\]
The next term is
\[
\sigma_{l}^{m}\left(2\right)=-\sigma_{l}^{m}\left(1\right)\frac{\left(2^{N-1}q+\tilde{s}-t-1\right)\left(2^{N-1}q+\tilde{s}+t-1\right)}{2\cdot5}\,.
\]
Elementary manipulations of this expression, noticing that $N\geq3$,
$\tilde{s}-1\equiv1\pmod{4}$ and that $\sigma_{r}^{m}\left(1\right)\equiv-1\pmod{4}$
(since $t$ is odd), give
\[
\sigma_{l}^{m}\left(2\right)\equiv2^{2N-3}q+2^{N-1}q+\sigma_{r}^{m}\left(2\right)\pmod{2^{N+1}}\,.
\]
Moving on to the next term, using again $N\geq3$ and $\tilde{s}\equiv2\pmod{4}$
we get
\[
\sigma_{l}^{m}\left(3\right)=-\sigma_{l}^{m}\left(2\right)\frac{\left(2^{N-1}q+\tilde{s}-t-2\right)\left(2^{N-1}q+\tilde{s}+t-2\right)}{3\cdot7}
\]
\[
\equiv2^{2N-3}q+2^{N-1}q+\sigma_{r}^{m}\left(3\right)\pmod{2^{N+1}}\,.
\]

At this point we observe that $\sigma_{l}^{m}\left(2\right)+\sigma_{l}^{m}\left(3\right)\equiv2^{N}q+\sigma_{r}^{m}\left(2\right)+\sigma_{r}^{m}\left(3\right)\pmod{2^{N+1}}$
(since $N\geq3$). A similar circumstance occurs in the next two terms. 

By Lemma \ref{lem:v2(sig(2))>2} $\sigma_{r}^{m}\left(2\right)\equiv0\pmod{4}$
so also $\mbox{\ensuremath{\sigma_{r}^{m}\left(3\right)\equiv0\pmod{4}}}$.
Hence, 
\[
\sigma_{l}^{m}\left(4\right)=-\sigma_{l}^{m}\left(3\right)\frac{\left(2^{N-1}q+\tilde{s}-t-3\right)\left(2^{N-1}q+\tilde{s}+t-3\right)}{4\cdot9}
\]
\[
\equiv2^{2N-3}q+2^{N-3}q\left(1-t^{2}\right)+2^{N-2}q\sigma_{r}^{m}\left(3\right)+\sigma_{r}^{m}\left(4\right)
\]
\[
\equiv2^{2N-3}q+\sigma_{r}^{m}\left(4\right)\pmod{2^{N+1}}
\]

and
\[
\sigma_{l}^{m}\left(5\right)\equiv-\sigma_{l}^{m}\left(4\right)\frac{\left(2^{N-1}q+\tilde{s}-t-4\right)\left(2^{N-1}q+\tilde{s}+t-4\right)}{5\cdot11}
\]
\[
\equiv2^{2N-3}q+\sigma_{r}^{m}\left(5\right)\pmod{2^{N+1}}
\]

Summing over $k\geq0$ we get that $H_{l}^{m}\left(-2\right)\equiv2^{N}q+H_{r}^{m}\left(-2\right)\pmod{2^{N+1}}$.
\end{proof}

\section{Proof of Theorem \ref{thm:up_to_one_Plm_in_cases}: The special cases
$m=0$ and $m=2$\label{subsec:A-note-on-m=00003D0,2,4}}

In these cases we trivially see that $P_{2}|P_{2}$ and we can easily
check that $P_{2}|P_{5}^{2}$. The general statement for even $m$
shows that these are the only solutions. Note that $P_{5}^{2}$ corresponds
to the harmonic polynomials $\left(x^{2}+y^{2}-2z^{2}\right)xyz$
and $\left(x^{2}+y^{2}-2z^{2}\right)\left(x^{2}-y^{2}\right)z$.

\section{Harmonic products of two harmonic polynomials\label{sec:Harmonic-products-of}}

We prove Theorem \ref{Theo:pq for q harmonic, p example case}, giving
evidence to the validity of Conjecture \ref{Conjecture:finite-share-zeros-b=00003D1}.
\begin{proof}[Proof of Theorem \ref{Theo:pq for q harmonic, p example case}]
We let $p\left(x,y,z\right)=x^{2}+y^{2}-2z^{2}$ and let $h=p\cdot f$.
We assume that both $h$ and $f$ are harmonic. Harmonic functions
are analytic, so they can be represented as infinite sums of homogeneous
polynomials. If $f$ is harmonic and $\Delta\left(pf\right)=0$ then
every homogeneous component of $f$, denoted $f_{d}$, is also harmonic
and has to give $\Delta\left(pf_{d}\right)=0$. So we can assume $f$
is a homogeneous harmonic polynomial of degree $d$. 

Let $f\left(x,y,z\right)=\sum_{i+j+k=d}a_{ijk}x^{i}y^{j}z^{k}$. The
Laplacian of $h$ is
\[
\Delta h=2\sum_{\alpha\in\left\{ x,y,z\right\} }\partial_{\alpha}p\partial_{\alpha}f=4\sum_{i+j+k=d}\left(i+j-2k\right)a_{ijk}x^{i}y^{j}z^{k}\,.
\]
Since the product $h$ is harmonic every coefficient of every monomial
of the above expression has to vanish. If $a_{ijk}\neq0$ then $i+j=2k$.
Combining this with $i+j+k=d$ we get $3k=d$. Hence $f$ can only
be of the form $f\left(x,y,z\right)=\sum_{i=0}^{\frac{2}{3}d}a_{i}x^{i}y^{\frac{2}{3}d-i}z^{\frac{d}{3}}$.
Since $f$ is harmonic,
\[
0=\Delta f=g\left(x,y\right)z^{\frac{d}{3}}+\sum_{i=0}^{\frac{2}{3}d}a_{i}\frac{d}{3}\left(\frac{d}{3}-1\right)x^{i}y^{\frac{2}{3}d-i}z^{\frac{d}{3}-2}
\]

where $g\left(x,y\right)$ is a polynomial in $x,y$.

We assume $f\neq0$ so $\exists i\,a_{i}\neq0$ and we have $\frac{d}{3}\left(\frac{d}{3}-1\right)=0\Longrightarrow d\in\left\{ 0,3\right\} $.

A straightforward calculation gives that the only harmonic homogeneous
polynomials $f$ of degree 3 such that the product $pf$ is harmonic
are linear combinations of $xyz$ and $x^{2}z-y^{2}z$, so the harmonic
functions $f$ such that $pf$ is harmonic are of the form $\alpha+\beta xyz+\gamma\left(x^{2}z-y^{2}z\right)$
with $\alpha,\beta,\gamma\in\mathbb{R}$.
\end{proof}
\begin{proof}[Proof of Theorem \ref{Theorem: pq for harmonic q and p with b}]
The proof is in the same spirit of the proof of Theorem~\ref{Theo:pq for q harmonic, p example case},
for details see \cite{adisMasters}.
\end{proof}

\section{The equivalence of Conjectures \ref{Conjecture:finite-share-zeros-b=00003D1}
and \ref{conj:(l,m)s.t.P2|Pl-finite}\label{sec:Equivalence-between-existence}}

In this section we explain why Conjectures \ref{Conjecture:finite-share-zeros-b=00003D1}
and \ref{conj:(l,m)s.t.P2|Pl-finite} are equivalent. We note that
this equivalence was already observed by Armitage \cite{Armitage-conesHarmonicVanish}.

Let $p\left(x,y,z\right)=x^{2}+y^{2}-2z^{2}$ and $Z=\left\{ p=0\right\} $.
It is known from \cite{malinnikova} that $h\in\mathcal{H}(Z)$ if
and only if there exists an analytic function $f$ such that $h=pf$.
In view of this, the equivalence of Conjectures \ref{Conjecture:finite-share-zeros-b=00003D1}
and \ref{conj:(l,m)s.t.P2|Pl-finite} follows from the following theorem.

\begin{thm}
\label{thm:equivalence:pq-harmonic,P2Q=00003DPlm}Let $l\in\mathbb{N}$.
The following statements are equivalent: 

\begin{enumerate}
\item Let $p\left(x,y,z\right)=x^{2}+y^{2}-2z^{2}$. There exists a harmonic
polynomial $h_{l}$ of degree $l$ such that $p|h_{l}$.
\item $\exists m\in\mathbb{N}$ such that $P_{2}\left(x\right)|P_{l}^{m}\left(x\right)$.
\end{enumerate}
\end{thm}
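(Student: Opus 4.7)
My plan is to exploit the rotational symmetry of the cone $Z=\{p=0\}$ about the $z$-axis, together with the standard spherical-harmonic decomposition of homogeneous harmonic polynomials in $\mathbb{R}^{3}$. First I reduce to the homogeneous case: since $p$ is homogeneous of degree $2$, the top-degree component of any $h_{l}$ as in (1) is itself harmonic (harmonicity is preserved on homogeneous components), has degree $l$, is nonzero, and remains divisible by $p$. In spherical coordinates one has $p=r^{2}(1-3\cos^{2}\theta)$, so $Z\setminus\{0\}=\{\cos\theta=\pm 1/\sqrt{3}\}$, which is exactly the zero set of $P_{2}(x)=\tfrac{1}{2}(3x^{2}-1)$.

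Next I use that the $(2l+1)$-dimensional space of homogeneous harmonic polynomials of degree $l$ is spanned by the solid harmonics $r^{l}P_{l}^{m}(\cos\theta)e^{im\varphi}$ for $-l\le m\le l$, and write $h_{l}=\sum_{m}c_{m}\,r^{l}P_{l}^{m}(\cos\theta)e^{im\varphi}$. Because $Z$ is invariant under rotations $R_{\varphi_{0}}$ about the $z$-axis and the $m$-th component is multiplied by the phase $e^{-im\varphi_{0}}$ under $R_{\varphi_{0}}$, orthogonality of the characters of $S^{1}$ forces $h_{l}|_{Z}=0$ to imply $c_{m}P_{l}^{m}(\pm 1/\sqrt{3})=0$ for every $m$ individually. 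Explicitly, for fixed $r>0$ and $\cos\theta=1/\sqrt{3}$, the trigonometric polynomial $\sum_{m}c_{m}P_{l}^{m}(1/\sqrt{3})e^{im\varphi}$ vanishes identically in $\varphi$ iff all its Fourier coefficients vanish.

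For the direction (1)$\Rightarrow$(2): $h_{l}\neq 0$ forces some $c_{m}\neq 0$, so $P_{l}^{m}(\pm 1/\sqrt{3})=0$; since $\sqrt{1-x^{2}}$ does not vanish at $\pm 1/\sqrt{3}$, this is exactly the condition $P_{2}\mid P_{l}^{m}$ in the sense defined after Conjecture~\ref{conj:(l,m)s.t.P2|Pl-finite} (for odd $m$, after dividing out $\sqrt{1-x^{2}}$). For (2)$\Rightarrow$(1): given such an $m$, take $h_{l}$ to be the real (or imaginary) part of the solid harmonic $r^{l}P_{l}^{m}(\cos\theta)e^{im\varphi}$. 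This is a nonzero homogeneous harmonic polynomial of degree $l$ vanishing on $Z$, and the Logunov--Malinnikova characterization cited in the introduction gives $h_{l}=pf$ for some analytic $f$; since $p$ is an irreducible polynomial, $f=h_{l}/p$ is forced to be a polynomial.

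The only subtlety is a bookkeeping issue with odd $m$: the factor $P_{l}^{m}(\cos\theta)e^{im\varphi}$ is not individually polynomial, but $r^{l}P_{l}^{m}(\cos\theta)e^{im\varphi}$ is, because $r^{m}e^{im\varphi}(1-\cos^{2}\theta)^{m/2}$ combines into $(x+iy)^{m}$. Once this is verified (and the spanning statement for solid harmonics recalled), the proof reduces to a one-line application of character orthogonality together with the already cited characterization of $\mathcal{H}(Z)$.
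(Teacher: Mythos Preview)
Your argument is correct and follows essentially the same route as the paper: expand a harmonic polynomial vanishing on $Z$ in solid spherical harmonics and use the linear independence of $\{e^{im\varphi}\}$ (and of $\{r^{k}\}$, which you replace by an initial reduction to the top homogeneous component) to force $P_{l}^{m}(\pm 1/\sqrt{3})=0$ for some $m$; conversely, build $h_{l}$ from the solid harmonic attached to such a pair $(l,m)$. One small remark: invoking the Logunov--Malinnikova characterization for $(2)\Rightarrow(1)$ is heavier than needed, since $p$ is an irreducible real polynomial and $h_{l}$ is a polynomial vanishing on the hypersurface $\{p=0\}$, so $p\mid h_{l}$ in $\mathbb{R}[x,y,z]$ by elementary algebra; the paper simply uses the factorization $p=r^{2}P_{2}(\cos\theta)$ directly.
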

\begin{proof}
Assume statement 2. Notice that $p=r^{2}P_{2}\left(\cos\theta\right)$.
If $P_{2}|P_{l}^{m}$ let $h_{l}=r^{l}P_{l}^{m}\left(\cos\theta\right)e^{im\varphi}$.

Conversely, assume $h_{l}$ is a harmonic polynomial of degree $l$
such that $p|h_{l}$. Since $h_{l}$ is harmonic it can be written
in spherical coordinates in the form $\sum_{k=0}^{l}\sum_{m=-k}^{k}a_{k,m}r^{k}P_{k}^{m}\left(\cos\theta\right)e^{im\varphi}$
with some $a_{k,m}\in\mathbb{R}$. The polynomial $h_{l}$ vanishes
on $\theta_{\pm}$, where $\cos\left(\theta_{\pm}\right)=\pm\frac{1}{\sqrt{3}}$.
Since $\left\{ e^{im\varphi}\right\} $ and $\left\{ r^{k}\right\} $
are each a linearly independent set of functions, we get that $P_{l}^{m}\left(\cos\theta_{\pm}\right)=0$
for some~$m$.
\end{proof}

\section{\label{sec:Technical-Lemmas}Auxiliary Lemmas and Special Notation}
\begin{lem}
For non-negative integers $m,n$:\label{Lemma-integral-cosine} 
\[
\frac{1}{\pi}\int_{0}^{\pi}\cos^{n}\varphi\cos\left(m\varphi\right)d\varphi=\begin{cases}
\frac{1}{2^{n}}{n \choose \frac{n+m}{2}} & m\leq n\text{ and }m\equiv n\pmod{2}\\
0 & otherwise
\end{cases}
\]
\end{lem}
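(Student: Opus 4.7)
The plan is to reduce the integral to a sum of exponential integrals via Euler's formula and then invoke the orthogonality relation $\frac{1}{2\pi}\int_{-\pi}^{\pi} e^{ij\varphi}\,d\varphi = \delta_{j,0}$ on the integers. Since the integrand $\cos^{n}\varphi \cos(m\varphi)$ is even in $\varphi$, I first rewrite
\[
\frac{1}{\pi}\int_{0}^{\pi}\cos^{n}\varphi\,\cos(m\varphi)\,d\varphi = \frac{1}{2\pi}\int_{-\pi}^{\pi}\cos^{n}\varphi\,\cos(m\varphi)\,d\varphi,
\]
which will let me treat $m=0$ uniformly with $m>0$.

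Next I expand, via the binomial theorem applied to $\cos\varphi = (e^{i\varphi}+e^{-i\varphi})/2$,
\[
\cos^{n}\varphi = \frac{1}{2^{n}}\sum_{k=0}^{n}\binom{n}{k} e^{i(n-2k)\varphi},
\]
and write $\cos(m\varphi) = (e^{im\varphi}+e^{-im\varphi})/2$. Distributing, the integral becomes
\[
\frac{1}{2^{n+1}}\sum_{k=0}^{n}\binom{n}{k}\Bigl(\tfrac{1}{2\pi}\int_{-\pi}^{\pi} e^{i(n-2k+m)\varphi}\,d\varphi + \tfrac{1}{2\pi}\int_{-\pi}^{\pi} e^{i(n-2k-m)\varphi}\,d\varphi\Bigr).
\]
By orthogonality, each inner integral equals $1$ exactly when the exponent vanishes, i.e.\ when $k = (n+m)/2$ or $k = (n-m)/2$, and equals $0$ otherwise.

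For a nonzero contribution, I need an integer $k\in\{0,1,\dots,n\}$ satisfying one of those equations, which forces $m\le n$ and $m\equiv n\pmod{2}$; this handles the ``otherwise'' case of the lemma by giving $0$. When the parity and range conditions hold and $m>0$, both values $k=(n\pm m)/2$ contribute, each with weight $\binom{n}{(n\pm m)/2}$, and by the symmetry $\binom{n}{(n-m)/2}=\binom{n}{(n+m)/2}$ the total is $\frac{2}{2^{n+1}}\binom{n}{(n+m)/2} = \frac{1}{2^{n}}\binom{n}{(n+m)/2}$. When $m=0$ the two conditions coincide into a single $k=n/2$, but the factor $1/2$ from $\cos(0\cdot\varphi)=\frac{1+1}{2}=1$ (as an average of the two exponentials) balances the doubling, yielding the same answer $\frac{1}{2^{n}}\binom{n}{n/2}$. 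The only potential pitfall is this $m=0$ vs.\ $m>0$ bookkeeping, but working over $[-\pi,\pi]$ from the start makes both cases fall out of a single computation.
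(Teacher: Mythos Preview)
Your proof is correct and follows essentially the same route as the paper: expand via Euler's formula and use orthogonality of exponentials (the paper phrases this as $\Re\bigl(\frac{1}{\pi}\int_0^\pi e^{il\varphi}\,d\varphi\bigr)=\delta_{0,l}$, while you symmetrize to $[-\pi,\pi]$ first, which is equivalent). Your separate handling of $m=0$ is unnecessary---the two Kronecker-delta terms still sum to $\binom{n}{n/2}+\binom{n}{n/2}$ and the same formula falls out---but it does no harm.
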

\begin{proof}
This becomes easy to check by writing $\cos\psi=\frac{e^{i\psi}+e^{-i\psi}}{2}$
and noting that $\Re\left(\frac{1}{\pi}\int_{0}^{\pi}e^{il\varphi}d\varphi\right)=\delta_{0,l}$. 
\end{proof}

\begin{proof}[Proof of Lemma \ref{lemma:forms of Hlm}]
We use formula (\ref{eq:plm-integral}) and set $\delta\in\left\{ 0,1\right\} $
such that $\left(l-m\right)\equiv\delta\pmod{2}$. 
\[
\frac{m!}{\left(l+m\right)!}P_{l}^{m}\left(x\right)=i^{m}\frac{1}{\pi}\int_{0}^{\pi}\left(x+y\cos\varphi\right)^{l}\cos\left(m\varphi\right)d\varphi
\]

\[
\underset{\text{Lemma \ref{Lemma-integral-cosine}}}{=}i^{m}\underset{j=0}{\sum^{\left\lfloor \frac{l-m}{2}\right\rfloor }}\begin{pmatrix}l\\
m+2j
\end{pmatrix}\frac{1}{2^{m+2j}}{m+2j \choose j}x^{l-m-2j}y^{m+2j}
\]
\[
\underset{(\ref{eq:x,y_by_z})}{=}x^{\delta}\left(iy\right)^{m}\underset{j=0}{\sum^{\left\lfloor \frac{l-m}{2}\right\rfloor }}\begin{pmatrix}l\\
m+2j
\end{pmatrix}\frac{1}{2^{m+2j}}{m+2j \choose j}\left(\frac{z}{z-4}\right)^{\left\lfloor \frac{l-m}{2}\right\rfloor -j}\left(\frac{4}{z-4}\right)^{j}
\]
\[
=\frac{x^{\delta}\left(iy\right)^{m}}{2^{m}\left(z-4\right)^{\left\lfloor \frac{l-m}{2}\right\rfloor }}\underset{j=0}{\sum^{\left\lfloor \frac{l-m}{2}\right\rfloor }}\begin{pmatrix}l\\
m+2j
\end{pmatrix}{m+2j \choose j}z^{\left\lfloor \frac{l-m}{2}\right\rfloor -j}
\]

\[
=\frac{x^{\delta}\left(iy\right)^{m}}{2^{m}\left(z-4\right)^{\left\lfloor \frac{l-m}{2}\right\rfloor }}\sum_{k=0}^{\left\lfloor \frac{l-m}{2}\right\rfloor }\begin{pmatrix}l\\
l-2k-\delta
\end{pmatrix}{l-2k-\delta \choose \left\lfloor \frac{l-m}{2}\right\rfloor -k}z^{k}
\]
\[
=\frac{x^{\delta}\left(iy\right)^{m}}{2^{m}\left(z-4\right)^{\left\lfloor \frac{l-m}{2}\right\rfloor }}\sum_{k=0}^{\left\lfloor \frac{l-m}{2}\right\rfloor }\frac{l!}{\left(2k+\delta\right)!\left(\left\lfloor \frac{l-m}{2}\right\rfloor -k\right)!\left(\left\lfloor \frac{l+m}{2}\right\rfloor -k\right)!}z^{k}
\]
\[
=\frac{x^{\delta}\left(iy\right)^{m}}{2^{m}\left(z-4\right)^{\left\lfloor \frac{l-m}{2}\right\rfloor }}\frac{l!}{\left(\left\lfloor \frac{l-m}{2}\right\rfloor \right)!\left(\left\lfloor \frac{l+m}{2}\right\rfloor \right)!}\sum_{k=0}^{\left\lfloor \frac{l-m}{2}\right\rfloor }\frac{\left(\left\lfloor \frac{l-m}{2}\right\rfloor \right)!\left(\left\lfloor \frac{l+m}{2}\right\rfloor \right)!}{\left(2k+\delta\right)!\left(\left\lfloor \frac{l-m}{2}\right\rfloor -k\right)!\left(\left\lfloor \frac{l+m}{2}\right\rfloor -k\right)!}z^{k}
\]
\[
=\frac{A_{l}^{m}x^{\delta}y^{m}}{\left(z-4\right)^{\left\lfloor \frac{l-m}{2}\right\rfloor }}\sum_{k=0}^{\left\lfloor \frac{l-m}{2}\right\rfloor }\frac{\left(-2\right)^{k}k!k!}{\left(2k+\delta\right)!}{\left\lfloor \frac{l-m}{2}\right\rfloor  \choose k}{\left\lfloor \frac{l+m}{2}\right\rfloor  \choose k}\left(-\frac{z}{2}\right)^{k}
\]
Recalling relation (\ref{eq:P-from-H}), the normalization $H_{l}^{m}\left(0\right)=1$
and Notation \ref{not:sigma-l-m}, we obtain the desired formulas.
\end{proof}
\begin{notation}
The dyadic valuation. \label{notation:The-dyadic-valuation.}
\end{notation}
For an integer $n$, denote

\[
v_{2}\left(n\right)=\begin{cases}
\max\left\{ \lambda\in\mathbb{N}\cup\left\{ 0\right\} \,:\,2^{\lambda}|n\right\}  & n\neq0\\
\infty & n=0
\end{cases}\,.
\]

For a rational number $\frac{m}{n}$ denote

\[
v_{2}\left(\frac{m}{n}\right)=v_{2}\left(m\right)-v_{2}\left(n\right)\,.
\]

\begin{lem}
$v_{2}\left(\sigma_{l}^{m}\left(k\right)\right)\geq v_{2}\left(k!\right)$\label{lem:power2 bigger than !}
\end{lem}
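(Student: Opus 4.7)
The plan is to read off the $2$-adic valuation directly from the explicit formulas for $\sigma_l^m(k)$ provided in Lemma \ref{lemma:forms of Hlm}. In both parity cases the expression has the shape
$$\sigma_l^m(k) \;=\; \frac{(-2)^k \binom{\left\lfloor \frac{l-m}{2}\right\rfloor}{k}\binom{\left\lfloor \frac{l+m}{2}\right\rfloor}{k}}{\binom{2k}{k}\cdot \varepsilon},$$
where $\varepsilon$ equals $1$ when $l+m$ is even and $2k+1$ when $l+m$ is odd. I will then compute $v_2$ of each factor separately and sum, using that $v_2$ is additive on products and quotients of nonzero rationals by Notation~\ref{notation:The-dyadic-valuation.}.

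The two binomial coefficients in the numerator are integers (with the convention that $\binom{a}{k}=0$ when $a<k$, in which case the lemma is trivial), so their $2$-adic valuations are non-negative; the factor $(-2)^k$ contributes exactly $k$; and the denominator factor $2k+1$, when present, is odd and so contributes $0$. The one nontrivial ingredient is the valuation of the central binomial coefficient,
$$v_2\!\left(\binom{2k}{k}\right) = s_2(k),$$
where $s_2(k)$ denotes the number of $1$'s in the binary expansion of $k$. This follows from Kummer's theorem (the $2$-adic valuation of $\binom{2k}{k}$ equals the number of carries when adding $k$ to itself in base $2$, which is exactly $s_2(k)$), or equivalently from Legendre's formula $v_2(n!) = n - s_2(n)$ applied to $(2k)!/(k!)^2$ together with the identity $s_2(2k) = s_2(k)$.

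Putting these pieces together yields
$$v_2(\sigma_l^m(k)) \;\geq\; k + 0 + 0 - s_2(k) - 0 \;=\; k - s_2(k),$$
and since Legendre's formula likewise gives $v_2(k!) = k - s_2(k)$, the claimed inequality follows. There is no real obstacle here; the lemma is essentially a one-line arithmetic consequence of the formulas in Lemma \ref{lemma:forms of Hlm} combined with the standard identity for $v_2\!\left(\binom{2k}{k}\right)$. The only care needed is the bookkeeping that each factor in the numerator and denominator is either an integer, a power of $-2$, or an odd integer, so that the factor-by-factor computation of $v_2$ is actually valid.
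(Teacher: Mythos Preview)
Your proof is correct. Both arguments are short, but they proceed differently. The paper first rewrites the formula as
\[
\sigma_{l}^{m}(k)=\binom{x}{y}\,\frac{z(z-1)\cdots(z-k+1)}{1\cdot 3\cdots(2k\pm 1)},
\]
obtained by combining $(-2)^{k}/\binom{2k}{k}$ with one of the two binomial factors (using $(2k)!=2^{k}k!\,(2k-1)!!$); the claim then follows from the elementary fact that a product of $k$ consecutive integers is divisible by $k!$, with the denominator manifestly odd. You instead leave the formula untouched and invoke Legendre's identity $v_{2}(k!)=k-s_{2}(k)$ together with Kummer's theorem to get $v_{2}\bigl(\binom{2k}{k}\bigr)=s_{2}(k)$, so that the contributions $k$ from $(-2)^{k}$ and $-s_{2}(k)$ from the central binomial coefficient combine to exactly $v_{2}(k!)$. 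The paper's route avoids any appeal to Legendre or Kummer and makes the later uses of the lemma (e.g.\ in the proof of Lemma~\ref{lemma:epsilon-coef-0-for-k>6}, where the bound $v_{2}(P_{k,k}(\cdot))\geq v_{2}(k!)$ on the product of $k$ consecutive integers is quoted directly) slightly more transparent; your route is more mechanical and would generalise verbatim to other primes.
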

\begin{proof}
$\sigma_{l}^{m}\left(k\right)$ can also be written in the form
\[
\sigma_{l}^{m}\left(k\right)={x \choose y}\frac{z\left(z-1\right)...\left(z-k+1\right)}{1\cdot3\cdot5\cdot...\cdot\left(2k\pm1\right)}
\]
so there are $k$ consecutive numbers in the nominator and no even
numbers in the denominator.
\end{proof}

\begin{lem}
For $m\equiv2\pmod{4}$ and $l\equiv1\pmod{4}$, $\sigma_{l}^{m}\left(2\right)\equiv0\pmod{4}$.\label{lem:v2(sig(2))>2}
\end{lem}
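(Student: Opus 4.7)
}
The plan is to apply the explicit formula (\ref{eq:sigma_l+m_odd}) at $k=2$ and do a direct $2$-adic analysis of the resulting rational number. Since $l$ is odd and $m$ is even, we are in the $l+m$ odd case, and using the notation $s=\lfloor l/2\rfloor$, $t=\lfloor m/2\rfloor$ of (\ref{eq:s,t=00003Dl,m/2}) we have $\lfloor(l-m)/2\rfloor=s-t$ and $\lfloor(l+m)/2\rfloor=s+t$. Substituting $k=2$ yields
\[
\sigma_l^m(2)\;=\;\frac{4\binom{s-t}{2}\binom{s+t}{2}}{\binom{4}{2}\cdot 5}\;=\;\frac{(s-t)(s-t-1)(s+t)(s+t-1)}{30}.
\]
So the task reduces to showing that the numerator above has $2$-adic valuation at least $3$, because the denominator $30$ has $v_2=1$.

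Next I would parametrize the congruence assumptions as $l=4q+1$ and $m=4p+2$, which gives $s=2q$ (even) and $t=2p+1$ (odd). Then $s-t=2q-2p-1$ and $s+t=2q+2p+1$ are both odd, while
\[
s-t-1=2(q-p-1),\qquad s+t-1=2(q+p),
\]
contributing a combined factor of $4$ to the numerator. The remaining odd-or-even question is the parity of $(q-p-1)(q+p)$. Here I would invoke the simple observation $(q-p-1)+(q+p)=2q-1$, which is odd, so exactly one of the two summands is even. Hence the numerator picks up at least one further factor of $2$, giving $v_2(\text{numerator})\geq 3$, and consequently $v_2(\sigma_l^m(2))\geq 2$, as required.

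There is no real obstacle in this lemma; it is a routine parity book-keeping in the spirit of the computations already carried out for Propositions \ref{lem:positive-first-bits-m=00003D0(4)} and \ref{prop:positive-first-bits-m=00003D2(4)}. The only point where one has to be slightly attentive is the final parity argument: one must not simply bound each of $s-t-1$ and $s+t-1$ individually (which only yields $v_2\geq 2$ in the numerator and hence the weaker $\sigma_l^m(2)\equiv 0\pmod 2$), but must use the fact that the two halved quantities $q-p-1$ and $q+p$ have opposite parities to extract the extra factor of $2$ that is needed.
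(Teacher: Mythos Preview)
Your argument is correct and is essentially the same as the paper's proof, just written out in full: the paper's proof simply records that $s$ is even, $t$ is odd, and that the only $2$-adic content of $\sigma_l^m(2)$ sits in the factor $(s-t-1)(s+t-1)/2$, which it asserts is $\equiv 0\pmod 4$. Your parametrization $s=2q$, $t=2p+1$ and the observation that $(q-p-1)+(q+p)=2q-1$ is odd is precisely what justifies that assertion.
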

\begin{proof}
Here $s$ is even and $t$ is odd. The only factors with powers of
two in this term are the following
\[
\frac{\left(s-t-1\right)\left(s+t-1\right)}{2}\equiv0\pmod{4}\,.
\]

\end{proof}
\bibliographystyle{plain}
\bibliography{harmonic_functions_vanishing_on_a_cone}

\end{document}